\DeclareMathOperator\invtanh{invtanh}
\DeclareMathOperator\csch{csch}
\begin{document}
\theoremstyle{definition}
\newtheorem*{definition}{Definition}
\newtheorem*{remark}{Remark}

\theoremstyle{theorem}
\newtheorem{theorem}{Theorem}

\newcommand{\invG}{\ensuremath{\mathrm{inv}\Gamma}}
\newcommand{\invGhat}{\ensuremath{<David's macro>}}
\newcommand{\gf}{$\Gamma$ function }
\newcommand{\stf}{Stirling's formula}
\newcommand{\gz}{\Gamma (z) }
\newcommand{\gzo}{\Gamma (z+1) }
\newcommand{\e}{\mathrm{e}}
\newcommand{\tipi}{t-i\pi}
\renewcommand\qedsymbol{$\hfill\natural$}

\DeclarePairedDelimiter{\ceil}{\lceil}{\rceil}
\title{Gamma and Factorial in the Monthly}
\markright{}
\author{Jonathan M. Borwein and Robert M. Corless}

\maketitle

 \section{Introduction} \label{introduction}

The Monthly has published roughly fifty papers on the $\Gamma$ function or Stirling's formula. We survey those papers (discussing only our favourites in any detail) and place them in the context of the larger mathematical literature on $\Gamma$. We will adopt a convention of $z \in \mathbb{C}$, $n \in \mathbb{N}$, $x \in \mathbb{R}$, and often $x>0$. We begin with notation:
\begin{equation}
 \Gamma (z) = (z-1)! = \int^{\infty}_{0} t^{z-1}\e^{-t} \ dt \phantom{due}\mathrm{for} \phantom{s} \operatorname{Re}z > 0 \>,
\end{equation} which extends the positive integer factorial $n!=n(n-1)\cdots 2 \cdot 1$ to the right half of the complex plane. The reflection formulas (discovered by Euler)
\begin{equation}
(-z)!z! = \frac{\pi z}{\sin \pi z} \phantom{due} \mathrm{or} \phantom{due} \Gamma(1-z)\Gamma(z) = \frac{\pi}{\sin \pi z}
\end{equation} explicitly show the analytic continuation and identify the poles at $z=-n$, nonpositive integers (for $\Gamma$), and negative integers (for factorial) with residues $(-1)^{n}/n!$ for $\Gamma$ and an equivalent shifted result for factorial.

We reluctantly bypass the amusing ``notation war" where authors argue about the ``minor but continual nuisance" of the shift by 1 in passing between factorial notation and $\Gamma$ notation, $z!=\Gamma(1+z)$. See for example the footnotes in~\cite{davis},~\cite{henrici1977},~\cite{jeffreys}, the introduction in~\cite{lanczos1964}, and the history in~\cite{gronau2003}, for some fullisades.

An important fact is that $\Gamma(x)$ is \textsl{logarithmically convex}, i.e. that $\ln \Gamma (x)$ is convex (see figure~\ref{fig:lngamma}) and that this distinguishes $\Gamma$ amongst all interpolants of the factorial. This is the Bohr-Mollerup-Artin theorem. A function that satisfies $f(z+1)=zf(z)$ and $f(1)=1$---but is not logarithmically convex---is called a \textit{pseudogamma function}. 

\iffalse
Starting with logarithm, some authors have used \textsl{log}, and others have used \textsl{Log}, for the natural logarithm (and almost nobody has commented on the branch cut). Davis quoted Euler who used just $\ell$ for logarithm. Stirling also used $\ell$, and wrote $\ell,x$ (with a comma) for the common (base 10) logarithm of $x$. We use $\ln z$ here for the natural logarithm, and explicitly state that we use the by now universal convention that $-\pi < \arg (z) \leq \pi $. We use \textsl{ln} and not \textsl{log} for two reasons: first, it frees up a useful location for a subscript and we follow David Jeffrey's notation $\ln_k z = \ln z + 2\pi i k \>.$ Second, we agree with Bill Gosper that \textsl{ln} is unambiguous even for the uninitiated: there can be no confusion as to the base used, namely $e \doteq 2.718 \ldots \>$. This is not as petty a  pedagogical point as it sounds. \fi Euler's definition of $z!$ is, for $z$ not a negative integer, 
\begin{align} 
z! &= \lim\limits_{m\rightarrow\infty} \frac{m!(m+1)^z}{(z+1)(z+2)\cdots(z+m)} \>.
\end{align} We have $z! = z (z-1)!$ and $\Gamma (z+1) = z \Gamma (z)$. Weierstrass apparently preferred Newman's formula:
\begin{equation} \label{weierstrass}
\frac{1}{\gz} = z\e^{\gamma z} \prod_{k\geq 1} \bigg(1+ \frac{z}{k} \bigg) \e^{-z/k} \>,
\end{equation} where $\gamma=\lim\limits_{n\rightarrow \infty} (\sum_{k=1}^{n} \sfrac{1}{k}- \ln n)$ is the Euler-Mascheroni constant, $\gamma = 0.577 \ldots \>.$

The derivative of $\Gamma$ is less convenient than is the combination 
\begin{equation}
\psi(z) = \frac{d}{dz} \ln \gz = \frac{\Gamma'(z)}{\gz} \>.
\end{equation} The function $\psi(z)$ is also called the digamma function.

\iffalse
One can extend $\Gamma$ by analytic continuation, and
\begin{equation} \label{eq:Euler}
z!(-z)! = \frac{\pi z}{\sin \pi z} \phantom{due} \mathrm{or} \phantom{due} \gz \Gamma(1-z) = \frac{\pi}{\sin \pi z} \>. \phantom{due to}\mathrm{(Euler)}
\end{equation} Another relation is the\fi Legendre derived the duplication formula:
\begin{align} \label{eq:Legendre}
\Gamma(2z) &= (2\pi)^{-\frac{1}{2}}2^{2z-1/2}\gz\Gamma\left(z+\frac{1}{2}\right) \>. 
\end{align} 
\iffalse
The extension below is due to Gauss:
\begin{align} \label{eq:Guass} 
\Gamma(nz) &= (2\pi)^{(1-n)/2} n^{nz-1/2} \gz \Gamma \left(z+\frac{1}{n}\right) \Gamma \left(z+\frac{2}{n}\right) \cdots \Gamma \left(z+\frac{n-1}{n}\right) \>.
\end{align}\fi The reflection formula 
\begin{align}
\zeta(z)&= \zeta(1-z)\Gamma(1-z)2^z\pi^{z-1} \sin\frac{\pi z}{2}
\end{align} was known to Euler, but was first proved by Riemann. Here $\zeta(z) = \sum_{n\geq 1} 1/n^z$ is the famous Riemann zeta function. Kummer (1847) derived a Fourier series:
\begin{align}
\ln\gz &= \frac{1}{2} \ln \frac{\pi}{\sin\pi z} + \sum_{k \geq 1} \frac{(\gamma +\ln 2\pi k ) \sin 2\pi k z}{\pi k}\>, \label{eq:kummer}
\end{align} which is illustrated in figure~(\ref{fig:kummer}). Also, the Beta function is
\begin{align}
B(s,t) = \int^{1}_{x=0} x^{s-1}(1-x)^{t-1} \ dx = \frac{\Gamma (s) \Gamma (t)}{\Gamma (s+t)} \>.
\end{align}

\iffalse
\begin{figure}[htb!]
	\centering
	\includegraphics[scale=0.4]{kummer}
	\caption{A graph of $y=\ln \bigg( \Gamma (x) \sqrt{\frac{ \sin \pi x}{\pi}} \bigg)$ (solid line) with a graph of $10$ terms of the Fourier Series from equation (\ref{eq:kummer}) (dashed line) superimposed.}
	\label{fig:kummer}
\end{figure}

\begin{figure}[htb!]
	\centering
	\includegraphics[scale=0.4]{lnGamma}
		\caption{A graph of $\ln \Gamma (x)$ for $x>0$. The Bohr-Mollerup-Artin characterization of $\Gamma$ as the unique log-convex function satisfying $\Gamma (x+1) = x \Gamma (x)$, $\Gamma (1) = 1$ is illustrated here. The concavity of $\ln \Gamma (x)$ is clearly visible.}
		\label{fig:lngamma}
\end{figure}
\fi

\begin{figure*}[t!]
    \centering
    \captionsetup{font=scriptsize,labelfont={bf}}
    \captionsetup{font=scriptsize}
    \begin{subfigure}[b]{0.5\textwidth}
        \centering
        \includegraphics[height=2.5in]{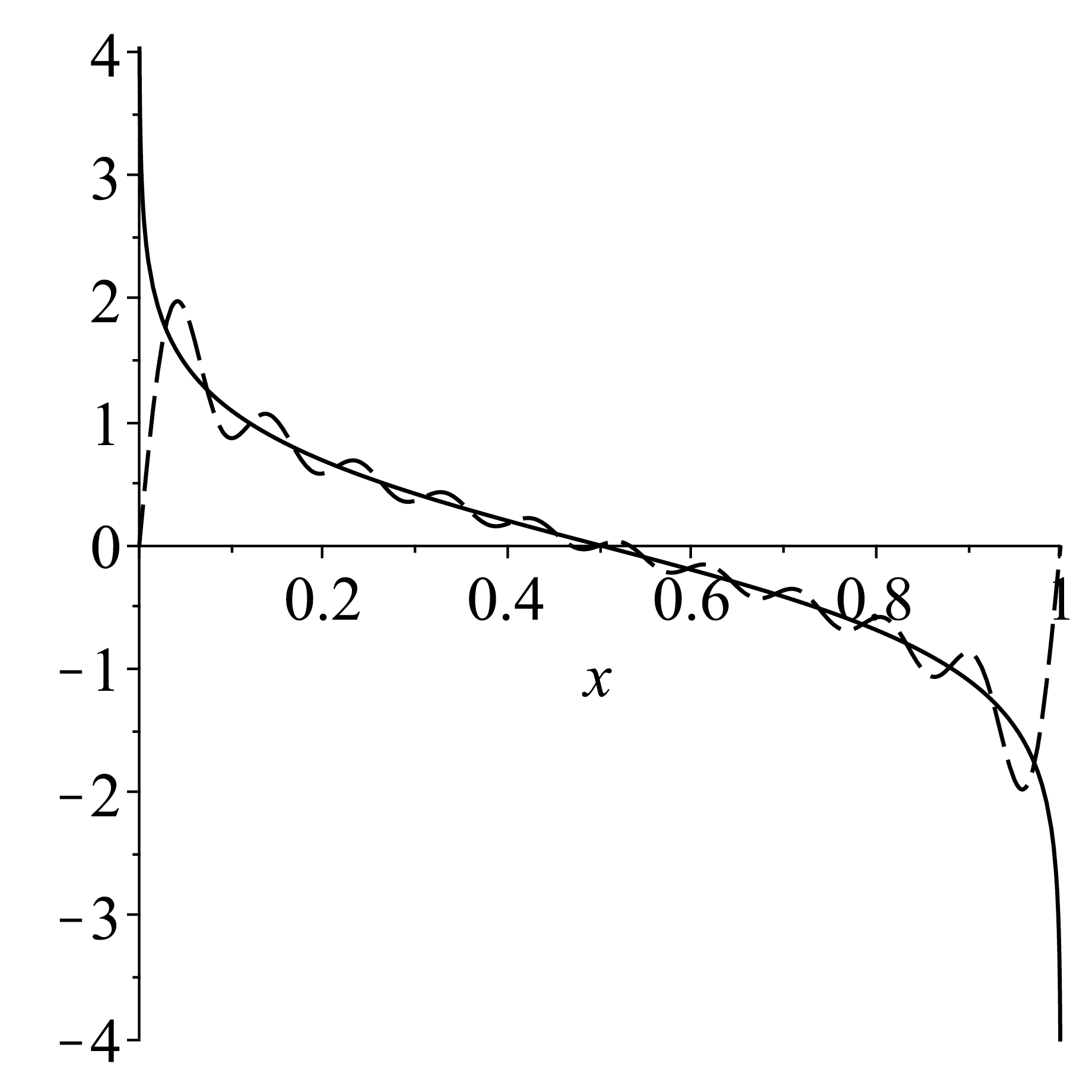}
        \caption{}
        \label{fig:kummer}
    \end{subfigure}%
    ~ 
    \begin{subfigure}[b]{0.5\textwidth}
        \centering
        \includegraphics[height=2.5in]{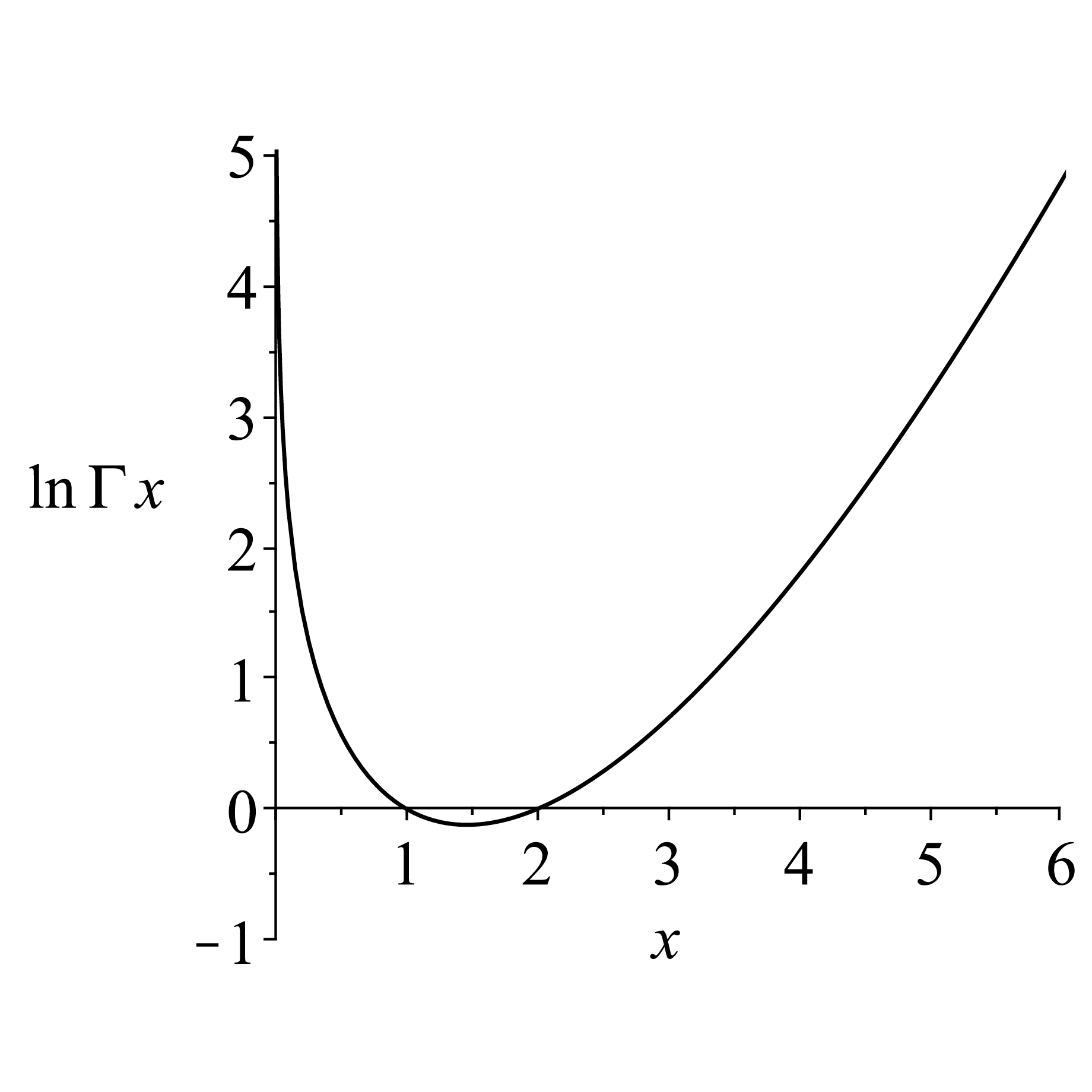}
        \caption{}
        \label{fig:lngamma}
    \end{subfigure}
    \caption{(a) A graph of $y=\ln ( \Gamma (x) \sqrt{(\sin \pi x)/\pi})$ (solid line) with a graph of $10$ terms of the Fourier Series from equation (\ref{eq:kummer}) (dashed line) superimposed. (b) A graph of $\ln \Gamma (x)$ for $x>0$. The Bohr-Mollerup-Artin characterization of $\Gamma$ as the unique log-convex function satisfying $\Gamma (x+1) = x \Gamma (x)$, $\Gamma (1) = 1$ is illustrated here. The convexity of $\ln \Gamma (x)$ is clearly visible.}
\end{figure*}

\iffalse
Euler's reflection formula (\ref{eq:Euler}) gives
\begin{equation}
\Gamma \bigg( \frac{1}{2} \bigg) \Gamma \bigg( 1 - \frac{1}{2} \bigg) = \frac{\pi}{\sin \pi /2} = \pi
\end{equation} so $\Gamma^{2}(1/2) = \pi$. Since $\Gamma (1/2)>0$, we have $\Gamma(1/2)=\sqrt{\pi}$. From $\Gamma (x+1)=x \Gamma (x)$ we find $\Gamma(3/2)=\sqrt{\pi}/2$, $\Gamma (5/2)=3\sqrt{\pi}/4$, and so on; similarly $\Gamma (1/2)= -1/2 \Gamma (-1/2)$ so $\Gamma (-1/2)=-2\sqrt{\pi}$, $\Gamma(-3/2)= 4\sqrt{\pi}/3$, and so on. More formally, we have the simple but useful corollary
\begin{equation}
\Gamma \bigg( \frac{1}{2} + k \bigg) = \frac{(2k)!}{2^{2k}k!}\sqrt{\pi} \phantom{due to}\mathrm{if} \phantom{s} k \geq 0,
\end{equation} and from $\Gamma (z) \Gamma(1-z) = \pi/ \sin \pi z$ again,
\begin{align}
\Gamma \bigg( \frac{1}{2} - k \bigg) &= \frac{\pi}{\sin \pi \bigg( \frac{1}{2} + k \bigg)\Gamma \bigg( \frac{1}{2} + k \bigg)} \\
&=(-1)^{k} \frac{2^{2k}k!}{(2k)!} \sqrt{\pi} \>.
\end{align} 
\fi

\iffalse
Commented out
&= \frac{\pi}{\cos (\pi k) \Gamma (\frac{1}{2} + k)} \\
\fi

\section{The $\Gamma$ and Factorial Functions: A cross--section of mathematics}\label{sec:gammaitself}

\iffalse
After reading a draft of section [\ref{introduction}] and the sentence saying $\Gamma(z)$ really needed no introduction, Jon asked ``What do people know about $\Gamma$, really?'' I took him to mean `what could we expect our readers to know already', and of course it's exactly the right question. I do think 
\fi

\iffalse All of those facts, and more, are contained in the Chauvenet prize-winning paper~\cite{davis}, a paper as old as I am\footnote{RMC is speaking. See the note ``About the paper" at the end.}, but I don't think that the typical Monthly reader can be presumed to know that article in \textsl{in toto}. Luckily there's a cure for that: go read it! However, I will \textsl{not} presume that you will stop reading this paper in order to find and read that one (even if it's a good idea, and I really suggest that you do so). Instead I will discuss my favourite Monthly papers on $\Gamma$ and factorial. I don't apologize for my bias. I will then talk about other papers, but briefly.

Following that, I'll give some discussion of something that has (until now) been missing from the Monthly: visualization and computation of $\Gamma$, and also the functional inverse of the $\Gamma$ function which we'll denote $\mathrm{inv}\Gamma$ or $\check{\Gamma};$ a particular branch may be labelled with a subscript, so: $\check{\Gamma}_{k}$. This of course is not to be confused with the reciprocal $\Gamma$ function $1/\Gamma(z)$. Because I do not use the notation $f^{-1}$, this should be pellucid.\fi

The following Monthly papers (in alphabetical order by surname of the first author) are the ones that stand out, our ``eleven favourites"\footnote{In~\cite{borwienchapman} the authors highlighted what they called ``$\pi *$" articles, i.e. those that had been cited at least $30$ times. In contrast, inclusion in this list here is purely subjective.}:

\begin{enumerate}

\item Patrick Ahern and Walter Rudin, ``Geometric Properties of the $\Gamma$ function"~\citep{ahern}
\item Wladimir de Azevedo Pribitkin, ``Laplace's Integral, the Gamma Function and Beyond"~\cite{prihitkin}
\item Richard Askey, ``Ramanujan's extensions to the $\Gamma$ and Beta functions"~\cite{askey}
\item Bruce Berndt, ``The $\Gamma$ function and the Hurwitz $\zeta$-function" ~\cite{berndt} 
\item Manjul Bhargava, ``The factorial function and generalizations"~\cite{bhargava} 
\item Philip J. Davis, ``Leonhard Euler's Integral"~\cite{davis}
\item Louis Gordon, ``A stochastic approach to the $\Gamma$ function"~\cite{gordon}
\item Detlef Laugwitz and Bernd Rodewald, ``A simple characterization of the $\Gamma$ function"~\cite{laugwitz} 
\item Reinhold Remmert, ``Wielandt's theorem about the $\Gamma$ function"~\cite{remmert}
\item Lee Rubel, ``A survey of transcendentally transcendental functions" ~\cite{rubel}
\item Gopala Krishna Srinivasan, ``The Gamma function: An eclectic tour"~\cite{srinivasan} 
\end{enumerate}

These eleven disparate papers provide a cross--section of mathematics: analysis, geometry, statistics, combinatorics, logic and number theory. We begin with the earliest, the Chauvenet--winning~\cite{davis} by Philip J. Davis. We then tour through the others and end with Fields medallist Manjul Bhargava's~\cite{bhargava}.

\subsubsection*{Fundamental Properties}

Davis tells the story of $\Gamma$ as an instance of how mathematics evolves. This required informed historical commentary and an appreciation of mathematical aesthetics. The paper was dedicated to the memory of Milton Abramowitz, who died (too young) in 1958. Davis was then chief of Numerical Analysis at The National Bureau of Standards of the United States, and a contributor to the great project that became (after six years of further hard work by Irene A. Stegun) the monumental Handbook of Mathematical Functions~\cite{abramowitz1966}. Indeed the graph of the real $\Gamma$ function in~\cite{davis} is apparently the same as in the Handbook. This is not a surprise: Davis wrote that section.

This monumental work has recently been updated as The Digital Library of Mathematical Functions (http://dlmf.nist.gov)~\cite{olver}. The DLMF and the similar-but-different INRIA project, Bruno Salvy's Dynamic Dictionary (http://ddmf.msr-inria.inria.fr/1.9.1/ddmf)~\cite{salvy}, fill a need for good online mathematical material. 

To return to $\Gamma$, the paper~\cite{davis} gives a story that enriches the plain facts in the Handbook, and illustrates the evolution of mathematical thought. \iffalse Textbook treatments such as~\cite{copson} contain more details of the proofs themselves, and sometimes mathematical history, at least a little, but the threads of growth tends to be neglected. Thus the paper fills a gap. \fi Davis spent a lot of space on the interpolation of $n!$ by $\Gamma (z+1)$, treating it with the philosophical attention it deserves. He did, however, note that $\Gamma$ arises in applications mostly because the integral defining it does.

The impressive paper~\cite{gordon} by L. Gordon proves many of these same facts by considering the $\Gamma$ distribution and taking the statistical point of view. Of course, $\Gamma$ is widely used in probability and statistics. 

Now we come to the survey~\cite{rubel} by L. A. Rubel, which is concerned with $\Gamma$ only insofar as $\Gamma$ is the most famous of the ``transcendentally transcendental" (TT) functions; that is, functions that do not satisfy any polynomial differential equation $P(x;y,y',\cdots,y^{n})=0$. Functions that \textit{do} satisfy such equations are called Differential Algebraic functions; for instance, the Lambert $W$ function~\cite{cghjk} satisfies $y\e^{y}=x$ and thus is DA because
\begin{equation}
x(1+y)y' - y = 0\>.
\end{equation} It's mentioned in~\cite{davis} that the first proof that $\Gamma$ is TT was due to H\"older in 1887.

\iffalse I confess that I had been afraid of reading this paper, because I thought it would be dense, deep, and difficult. I am delighted to report that it is a fascinating read. \fi Rubel economically and intelligibly presents Ostrowski's proof that $\Gamma$ is TT. The proof has many of the characteristics one sees in computer algebraic proofs: ordering terms in reverse lexicographic order (Rubel doesn't call it that), and using divisibility arguments to arrive at a minimalist polynomial $P$ for $\Gamma$, which can then be shown self-contradictory. The only property of $\Gamma$ aside from differentiability that is used is the functional equation $\Gamma (x+1) = x \Gamma (x)$. Therefore, the proof runs through unchanged for (sufficiently differentiable) pseudogamma functions. In particular, Hadamard's entire pseudogamma $H (z)$ (see equation~\eqref{eq:Hadamard} below), is TT. This was not pointed out in~\cite{davis} and it's not obvious from its definition (after all, the combination $\Gamma (x+1) / \Gamma (x)$ is algebraic; it's not automatic that combinations of $\Gamma ( \sfrac{(1-x)}{2}),  \Gamma (1-\sfrac{x}{2})$ and $\Gamma (1-x)$ will be TT). \iffalse I could use Google to look this up, now, but somehow I think I'll be happier just quoting this! \fi Altogether this highly-cited paper provides a very readable introduction to the area, and fills a gap in the education of people who use $\Gamma$.

\subsubsection*{Characterizing $\Gamma$}

The paper~\cite{laugwitz} by Laugwitz and Rodewald was a suprise to RMC, as was the paper~\cite{remmert} by Remmert. RMC had not been aware of satisfactory alternatives to the Bohr-Mollerup-Artin characterization of $\Gamma (x)$. Such a condition is necessary, because there are pseudogamma functions; for instance, Davis gives Hadamard's example (``Hadamard's pseudogamma function''):
\begin{equation}
H(z) = \frac{1}{\Gamma (1-z)} \frac{d}{dz} \ln \Bigg[ \Gamma \bigg( \frac{1-z}{2} \bigg) \bigg/ \Gamma \bigg( 1 - \frac{z}{2} \bigg) \Bigg] \>.
\end{equation} Maple simplifies this quite dramatically to 
\begin{equation} \label{eq:Hadamard}
H(z) = \frac{\psi \bigg(1-\frac{z}{2} \bigg) - \psi \bigg( \frac{1-z}{2} \bigg)}{2 \Gamma (1-z)} \>,
\end{equation} where $\psi (z) = \Gamma'(z) / \Gamma (z)$. In retrospect, it's not so dramatic, merely replacing a derivative with $\psi$; nonetheless it \textit{looks} simpler. For a more nuanced discussion of the satisfaction given by such formulae, see~\cite{wilf} by Wilf. \iffalse The $\psi$ function does encode a lot of knowledge of $\Gamma$. \fi

\begin{figure}[htb!]
	\center
	\includegraphics[scale=0.4]{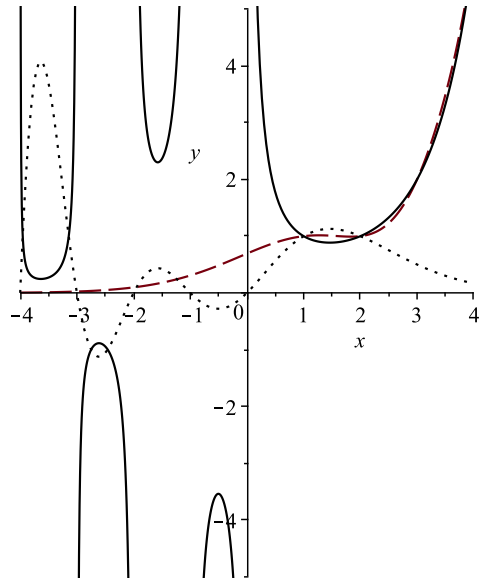}
	\caption{On the same scale as the graph in~\cite{davis} we plot $\Gamma (x)$ (solid black line), $1/\Gamma (x)$ (dotted line) and Hadamard's pseudogamma function from equation~(\ref{eq:Hadamard}) (dashed line). The lack of convexity of Hadamard's function is clearly visible.}
	\label{fig:realgamma}
\end{figure}

Curiously enough $H(z)$ is an \textit{entire} function. Why, then, is $\Gamma (z)$ to be preferred to $H(z)$, given that $\Gamma$ has poles at non-positive integers? The  ``real" answer given by Davis is that the integral for $\Gamma$ appears frequently. But the logarithmic convexity characterization of Bohr-Mollerup-Artin is compelling. Davis eloquently explains \textit{``The desired condition was found in notions of convexity. A curve is convex if the following is true of it: take any two points on the curve and join them by a straight line; then the portion of the curve between the points lies below the line. A convex curve does not wiggle; it cannot look like a camel's back. At the turn of the century, convexity was in the mathematical air$\ldots$"}~\cite[p.~867]{davis}. 

We point out that logarithmic convexity is stronger than mere convexity: the convexity of $f(x)$ does not imply the convexity of $\ln f(x)$, but the convexity of $\ln f(x)$ implies that of $f(x)$.

But there are alternative characterizations. The characterization discussed by Laugwitz and Rodewald in~\cite{laugwitz} is actually due to Euler himself, and is very elegant: ``For a fixed infinitely large natural number $n$ and all finite natural numbers $m$, the expression $(n+m)!$ behaves approximately like a geometric sequence."~\cite[p.534]{laugwitz}. Laugwitz and Rodewald go on to put this in a more usual `limit' language: they prove that if $L(0)=0, L(x+1) = \ln (x+1) + L(x)$, and $L (n+x) = L(n) + x \ln (n+1) + r_{n} (x)$ where $r_{n} (x) \to 0$ as $n \to \infty$ (here $x\geq 0$ and $n \in \mathbb{N}$) then $L (x)$ exists and is unique and is thus $\ln (\Gamma (x+1))$. They do this by showing that the conditions imply a convergent series for $L(x)$; they then verify that the resulting series,
\begin{equation} \label{eq:13}
L(x) = -\gamma x + \sum_{k \geq 1} \Bigg( \frac{x}{k} - \ln \bigg( 1 + \frac{x}{k} \bigg) \Bigg) \>,
\end{equation} satisfies the conditions.

Another alternative characterization is apparently due to Wielandt. Until Remmert's paper~\cite{remmert}, this seems to have been ``hardly known". Wielandt's theorem states that if $F(z)$ is holomorphic in the right half plane, $F(z+1) = z F(z)$ for all such $z$, and is \textsl{bounded} on the strip $1 \leq  \operatorname{Re}z \leq 2$, then $F(z)=a \Gamma (z)$ and $a=F(1)$. This characterization seems to be quite easy to use, more so than logarithmic convexity. 

The intriguing and concise paper~\cite{ahern} by Ahern and Rudin cites~\cite{remmert} but does not appear to actually use Wielandt's theorem; instead it extends logarithmic convexity into the complex plane. The final theorem in the paper states that $\ln \Gamma (z)$ is univalent in $\operatorname{Re} \ z > x_{0}$, where $x_{0}$ is the only positive zero of $\psi (x)$. They note $1<x_{0}<2$; by a computer algebra system, $x_{0}=1.4616321\ldots$. There seems to be no standard name for this number. This result on univalence seems to be as close as it gets in the Monthly to a discussion of the functional inverse of $\Gamma$ (see later in this section).

The very interesting (and fun to read) paper~\cite{prihitkin} by W. de Azevedo Pribitkin \textit{does} use Wielandt's theorem. The paper is concerned with the Laplace integral for the reciprocal of $\Gamma$:
\begin{equation}
\frac{1}{\Gamma (s)} = \frac{1}{2 \pi} \int^{\infty}_{\mu = - \infty} \frac{e^{a+i \mu}}{(a+i \mu)^{s}} d\mu
\end{equation} where $a>0$ is any positive real number. This paper comes as close as any in the Monthly to the computation scheme of ~\cite{schmelzer}.

Srinivasan's ``Eclectic Tour"~\cite{srinivasan} also uses Wielandt's theorem. This wide-ranging survey achieves a remarkable complementarity to Davis' paper; indeed even the historical details are different (not contradictory, just complementary). New elegant proofs of many results are attained using a function-theoretic mindset and a systematic approach, apparently termed the ``additive approach" by Remmert. The paper starts with the differential equation (which is also in~\citep{davis})
\begin{equation}\label{eq:alsoInDavis}
\frac{d^{2}}{dz^{2}} \ln f(z) = \sum_{n\geq 0} \frac{1}{(n+z)^{2}} \>.
\end{equation} With initial conditions $f(1)=1$ and $f'(1)=-\gamma$, this gives $f(z)= \Gamma (z)$. Wielandt's theorem is used in the proof. \iffalse Another differential equation (\textit{presented there}) is due to Dedekind:
\begin{equation} \label{BEquation}
B''B - (B')^{2} = B^{4} \>.
\end{equation} With initial conditions $B(1/2) = \pi$ and $B'(1/2)=0$ it is solved by $B(a,1-a)$ where $B$ is the Beta function. Many other formulas, including Gauss' generalization of the duplication formula, are proved there. \fi Altogether, this is a remarkable paper, which presents (with proofs!) a very significant body of knowledge of $\Gamma$, and of related functions. 

\subsubsection*{Relationship to $\zeta$}

Bruce Berndt's short paper~\citep{berndt} illuminates old connections of $\Gamma$ to the Hurwitz~$\zeta$ function. He gives several proofs using that connection. For instance, he proves equation~(\ref{eq:kummer}). He first proves 
\begin{equation}
\ln \gz = \zeta'(0,z) - \zeta'(0)
\end{equation} where $'$ means differentiation with respect to $s$ (the first variable) and 
\begin{equation}
\zeta(s,z) = \sum_{k\geq 0} (k+z)^{-s}
\end{equation} is the Hurwitz zeta function; $\zeta(s) = \zeta(s,1)$ is the Riemann zeta function.

\subsubsection*{Generalizations}

We begin with Askey's beautiful paper~\cite{askey}. Askey concurred with Davis on the reason $\Gamma$ is useful: ``Euler's integral (1.3) occurs regularly and is the real reason for studying the gamma function."~\cite[p.~347]{askey}. Other than that, Askey's paper is quite different from Davis's, although it, too, tells a story, namely the story of Ramanujan's $q$-extensions of the $\Gamma$ and Beta functions. Askey provides much detail, many results, and concise proofs. He places the results in context, and explains why they are important. His starting point was that Hardy thought the results strange; Askey demonstrates that the results are profoundly connected to modern developments, and thus not so `strange' today. Of course, part of the direction of modern developments is  due to Ramanujan, so this conclusion might have been expected. Both Davis and Askey show relationships of $\Gamma$ to other functions and areas.

Now we come to the paper~\cite{bhargava} by Manjul Bhargava. The first time RMC read the paper, RMC didn't notice the author's name\footnote{In his defence, it was in a BIG pile of $\Gamma$ papers!}; RMC noted down that the paper was concerned with discrete generalizations of the factorial, was focused on number theoretic ideas and applications, and seemed quite substantial, much more so than some other ``generalization" papers in the Monthly, namely~\cite{newton} and~\cite{fort} (although the last isn't so lightweight, either).

\iffalse
The next reading pass came after Scott Chapman visited Western and gave a talk ``The Monthly: Past, Present, and Future", and mentioned the paper during his talk. He told us of some very positive outcomes of the paper.
\fi

On RMC's next pass, he paid a bit more attention. Bhargava was awarded the Fields medal in 2014 for new methods in geometry of numbers. It seems a safe bet that this paper, although written 14 years previous to his prize, might perhaps contain something a bit more substantial than the average paper in the Monthly on the $\Gamma$ function\footnote{It's interesting that the notion ``an average paper in the Monthly on $\Gamma$" actually makes sense.}. Of course, it does, even though it is written in a lively, almost joyful, style. 

Instead of making a small change in the definition of $\Gamma$ (in contrast, the paper~\cite{newton} by T. A. Newton changes integrals to sums, and derivatives to differences; the paper~\cite{fort} by Tomlinson Fort changes the ``$+1$" in the functional equation to a variable ``$+h_{n}(x)$") Bhargava isolates an invariant that characterizes the factorial: he writes $k!$ as the product over all primes $p$ of certain valuations. Then he is able to generalize this to work over subsets of the integers or even over more abstract objects. To do this he used something reminiscent of the Leja ordering for good-quality use of Newton interpolation, and indeed the bases he constructs are very like Newton interpolational bases. \iffalse I'll be honest and admit that he loses me when he gets to Dedekind rings---even his definition in the paper didn't help me---but the paper doesn't suffer from its advanced parts: it's still clear that \fi This generalization encompasses the $q$-factorials from enumerative combinatorics which we met in~\cite{askey} and it seems the ones in~\cite{newton} too, as well as others in the non-Monthly literature. 

Bharghava ends his paper with a sequence of then-open questions, including questions on generalizations of this factorial to a generalized $\Gamma$ function, and on generalizations of Stirling's formula.

\subsection{Shorter papers and notes}

There are quite a few short pieces on $\Gamma$ in the Monthly. One of the oldest papers in the Monthly is~\cite{basoco}, which uses explicit expressions for orthogonal polynomials and their orthogonality conditions to arrive at certain identities for binomial double sums. 

The Note~\cite{wendel} uses H\"older's inequality to establish
\begin{equation}
\bigg( 1 - \frac{a}{x+a} \bigg)^{1-a} \leq \frac{\Gamma (x+a)}{x^{a} \Gamma (x)} \leq 1
\end{equation} for $0<a<1$ and $x>0$; letting $x \to \infty$ gives the classical $\Gamma (x+a) \sim x^{a} \Gamma (x)$, which is an asymptotic generalization of the functional equation $\Gamma (x+1)=x \Gamma (x)$. Nanjundiah's paper~\cite{nanjundiah} contains an interesting trick, using the asymptotic relation $B(\alpha,t) \sim \Gamma(\alpha)/t^{\alpha}$ as $t \to \infty$ between the Beta function and the $\Gamma$ function to get the exact relationship, $B(x,y) = \Gamma(x)\Gamma(y)/\Gamma(x+y)$ and from there through $\Gamma(x+a) \sim x^{a} \Gamma(x)$ (again!) to Euler's product definition.

The very pretty paper~\cite{nijenhuis} cites~\cite{borweinzucker} and a Monthly problem~\cite{glasser} as motivation, and produces a recipe for finding some short products of Gammas that give simple values, such as
\begin{equation}
\Gamma \bigg( \frac{1}{62} \bigg) \Gamma \bigg( \frac{33}{62} \bigg) \Gamma \bigg( \frac{35}{62} \bigg) \Gamma \bigg( \frac{39}{62} \bigg) \Gamma \bigg( \frac{47}{62} \bigg) = 2^{4}\pi^{5/2} \>.
\end{equation} The paper~\cite{brookes} merely finds $\Gamma$ functions in a particular solution to the strange (nonlinear) ODE $\frac{d^{n}y}{dx^{n}} \frac{d^{m}x}{dy^{m}}=1$.

The paper~\cite{knuth} by Knuth probably shouldn't be included in this review, although the ratio $n!/n^{n}$ plays a starring role: this is Egorychev's theorem, that the permanent (like the determinant, but with all plus signs instead of $(-1)^{\sigma}$ in the permutation expansion) of a doubly stochastic matrix cannot be less than this, and therefore the doubly stochastic matrix $\mathbf{A}$ with $a_{ij}=1/n$ has this minimal permanent. Egorychev had settled a long-standing conjecture of P\'{o}lya's with this theorem, and the purpose of~\cite{knuth} was pure exposition. The paper also demonstrates that this review cannot be expected to be complete---the paper was not found by computer search, but rather by browsing, i.e. by serendipity. 

Speaking of serendipity, the paper~\cite{bRoss} uses an exploration of something related to fractional integration to discuss just that, in mathematical practice. The notion of fractional integration, i.e. interpolation in the dimension of a multiple integral, is an idea of the same type that led to the creation of $\Gamma$, of course, as discussed in~\cite{davis}.

The paper~\cite{frame} is concerned with the ratio and its approximation
\begin{equation}
\frac{\Gamma \bigg( n + \dfrac{1+u}{2} \bigg)}{\Gamma \bigg(n + \dfrac{1-u}{2} \bigg)} \sim \bigg( n^{2} + \dfrac{1-u^{2}}{12} \bigg)^{u/2}
\end{equation} for large $n$. This formula is exact when $n\geq 1$ is an integer and $u = 0, \pm 1, \pm 2$. The rest of the paper is concerned with the error, which the author shows is exp$(- \epsilon_{n}(u))$ with
\begin{equation}
\epsilon_{n}(u) = \frac{u (1-u^{2})(4-u^{2})}{6! n^{4}}F_{n}(u)
\end{equation} where $0<F_{n}(u) \leq 1$ for $|u| \leq 1$. The paper~\cite{jameson} is also concerned with $\Gamma$ function ratios. 

The expression~(\ref{eq:alsoInDavis}) has a similar analogue in rising powers, namely
\begin{align}
\frac{d^{2}}{dz^{2}} \ln \Gamma (z) &= \frac{1}{z} + \frac{1!}{2} \cdot \frac{1}{z(z+1)} + \frac{2!}{3} \cdot \frac{1}{z(z+1)(z+2)} + \cdots \\
&= \sum_{\ell \geq 0} \frac{\ell !}{(\ell+1)} \cdot \frac{1}{z^{\overline{\ell}}} \phantom{due to} \operatorname{Re}z >0
\end{align} where $z^{\overline{\ell}} = z(z+1)\cdots(z+\ell-1)$, $z$ to the $\ell$ rising. This equation can be derived from a formula of Gauss for a hypergeometric function with unit argument,
\[
F
\left(
\begin{matrix}
a, b \hfill \\ c
\end{matrix}
\, \middle\vert \,
1
\right) 
= \sum_{j \geq 0} \frac{a^{j}b^{j}}{c^{j}} \cdot \frac{1}{j!} = \frac{\Gamma (c) \Gamma (c-a-b)}{\Gamma (c-a) \Gamma (c-b)}
\] if $\operatorname{Re}(c-a-b) >0$ and $c \not \in \mathbb{N}$. The paper~\cite{ruben1976} gives some details of a proof. 

The paper~\cite{huber} uses the $\Gamma$ function to find volumes of spheres in higher dimension.

The paper~\cite{davis1957} by H. T. Davis considers the generating function for so--called \textit{logarithmic numbers} $L_{n}$: \[ \frac{x}{\ln (1+x)} = 1 + L_{1}x + L_{2}x^{2} + \cdots \] and establishes the curious interpolation formula $L(t)=L_{n}$ when $t=n$, where
\begin{equation}
L(t) = \int^{1}_{s=0}
\left( 
\begin{matrix}
s \hfill \\ t
\end{matrix}
\right) 
ds \>,
\end{equation} among other results. H. T. Davis does not use the binomial notation, but rather a ratio of $\Gamma$ functions, which makes clear the meaning of the neater binomial coefficient used here. 

Finally, a referee points out the delightful paper~\cite{lange1999} in which Lange uses properties of $\Gamma$ to give a new continued fraction for $\pi$. Since $\Gamma$ didn't appear in the title, we missed that one!

\subsection{Gaps: computation, visualization, inverse $\Gamma$}

One purpose of a survey such as this is to identify gaps. We noticed three. There's not much visualization for $\Gamma$ in the Monthly, apart from in~\cite{davis}. There's almost \textit{no} computational work. The most surprising omission, though, is the lack of exploration of the functional inverse of $\Gamma$, which we denote here by~$\invG$ or~$\check{\Gamma}$. To be fair, we have found only four references in total on $\invG$, and none in the Monthly. Still, to have studied a function since 1730, or 1894 from the start of the Monthly, without thinking of the inverse? It seems quite the blind spot.

\iffalse
Less seriously, it's not remarked on, anywhere we looked in the Monthly, that the growth of $\Gamma (1/2 + k)$ as $k \to \infty$ implies a corresponding (reciprocal apart from $\pi$) decay of $\Gamma (1/2 - k)$. It's true that $\Gamma (z)$ is never zero (which we can deduce from the assertion that $1/ \Gamma (z)$ is entire; see equation (\ref{weierstrass})), but in double precision, $\Gamma (1/2 -171)$ will be so small that it will \textit{underflow} to zero. Increasing the precision to, say, 100 digits, and the max exponent, merely delays the underflow.
\fi

\iffalse
We will see later that $\Gamma (1/2 + k) \sim \sqrt{2 \pi} k^{k}\e^{-k}$ as $k \to \infty$, by Stirling's original formula. Hence $\Gamma (1/2 - k) \sim \sqrt{\pi /2} \e^{k}k^{-k}$ as $k \to \infty$.
\fi

\subsection{Visualization}

There has been significant progress in comprehensible visualization of complex functions, by the use of color phase plots~\cite{wegert2012,wegertsemmler2010}. In the inset figure in figure~\ref{fig:phasekey} the $z$-plane is colored by arg$(z)=\mathrm{invtan}(y,x)$; the same colors are used in figure~\ref{fig:phaseplot} for coloring by arg$(\Gamma (z))$. Of course, $z=x+iy$. The poles at $0$, $-1$, $-2$, and so on are clearly visible. In figure~\ref{fig:3dplot} we've added the same phase coloring to the plot of $|\Gamma(z)|$, on the same scale as the famous hand-drafted plot of~\cite{jahnkeemde1945}. If one looks carefully, this figure illustrates Wielandt's boundedness characterization: because the contours in the right half bend away from $y$ axis, the boundedness on (say) $1 \leq x \leq 2$ is entirely believable.

\iffalse
\begin{figure}[htb!]
	\centering
	\includegraphics[scale=0.4]{phase1}
	\caption{A phase plot of $\Gamma (z)$ using the methods of~\cite{wegert2012}. The colors indicate the argument of $\Gamma (x+iy)$, and because (compared to the figure~\ref{fig:phasekey} where orange-yellow-green-blue-velvet-red goes counterclockwise around $z=0$) the colors change in the clockwise direction around $z=-n$ for integers $n \ge 0$, we can see that there are poles there.}
	\label{fig:phaseplot}
\end{figure}

\begin{figure}[htb!]
	\center
	\includegraphics[scale=0.4]{phasekey}
	\caption{This is a phase plot of $f(z)=z$. The colors are those of argument$(z)$, that is, of invtan$(y,x)$ if $z=x+iy$. We see the colors change from light blue (on the positive real axis) in a rainbow palette as we move counterclockwise around zero. This sequence of colors, in this order, indicates a zero of $f(z)$. This follows~\cite{wegert2012}.}
	\label{fig:phasekey}
\end{figure}

\begin{figure}[htb!]
	\center
	\includegraphics[scale=0.4]{3dplot}
	\caption{The famous 3D plot of $|\Gamma(z)|$ from~\cite{jahnkeemde1945} can scarcely be improved upon. We add color, here, that of the phase; any improvement on the older work seems merely cosmetic.}
	\label{fig:3dplot}
\end{figure}
\fi

\begin{figure*}[t!]
    \centering
    \captionsetup{font=scriptsize,labelfont={bf}}
    \captionsetup{font=scriptsize}
    \begin{subfigure}[b]{0.5\textwidth}
        \centering
        \includegraphics[height=2.5in]{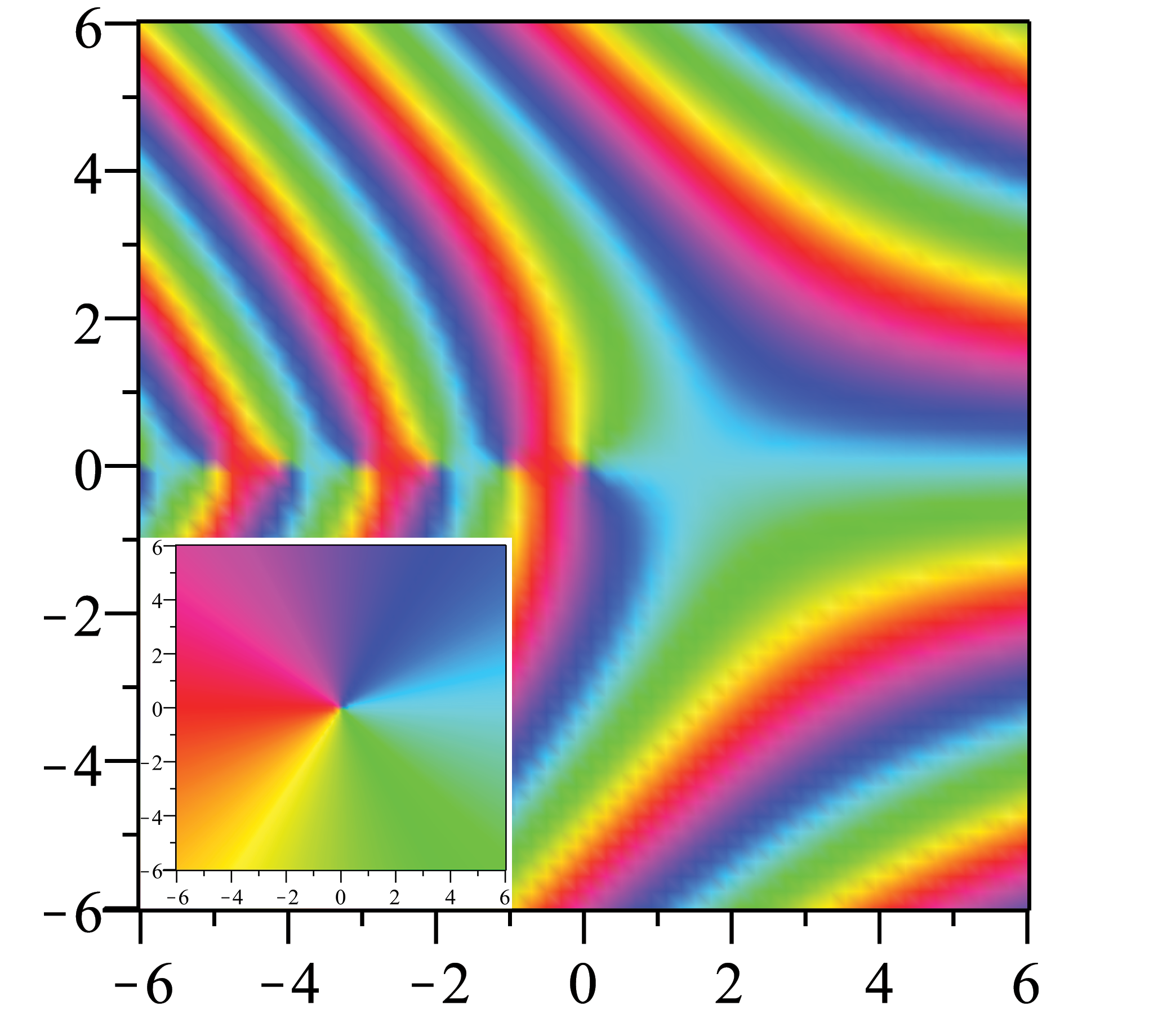}
        \caption{}
        \label{fig:phasekey}
        \label{fig:phaseplot}
    \end{subfigure}%
    ~ 
    \begin{subfigure}[b]{0.5\textwidth}
        \centering
        \includegraphics[height=2.5in]{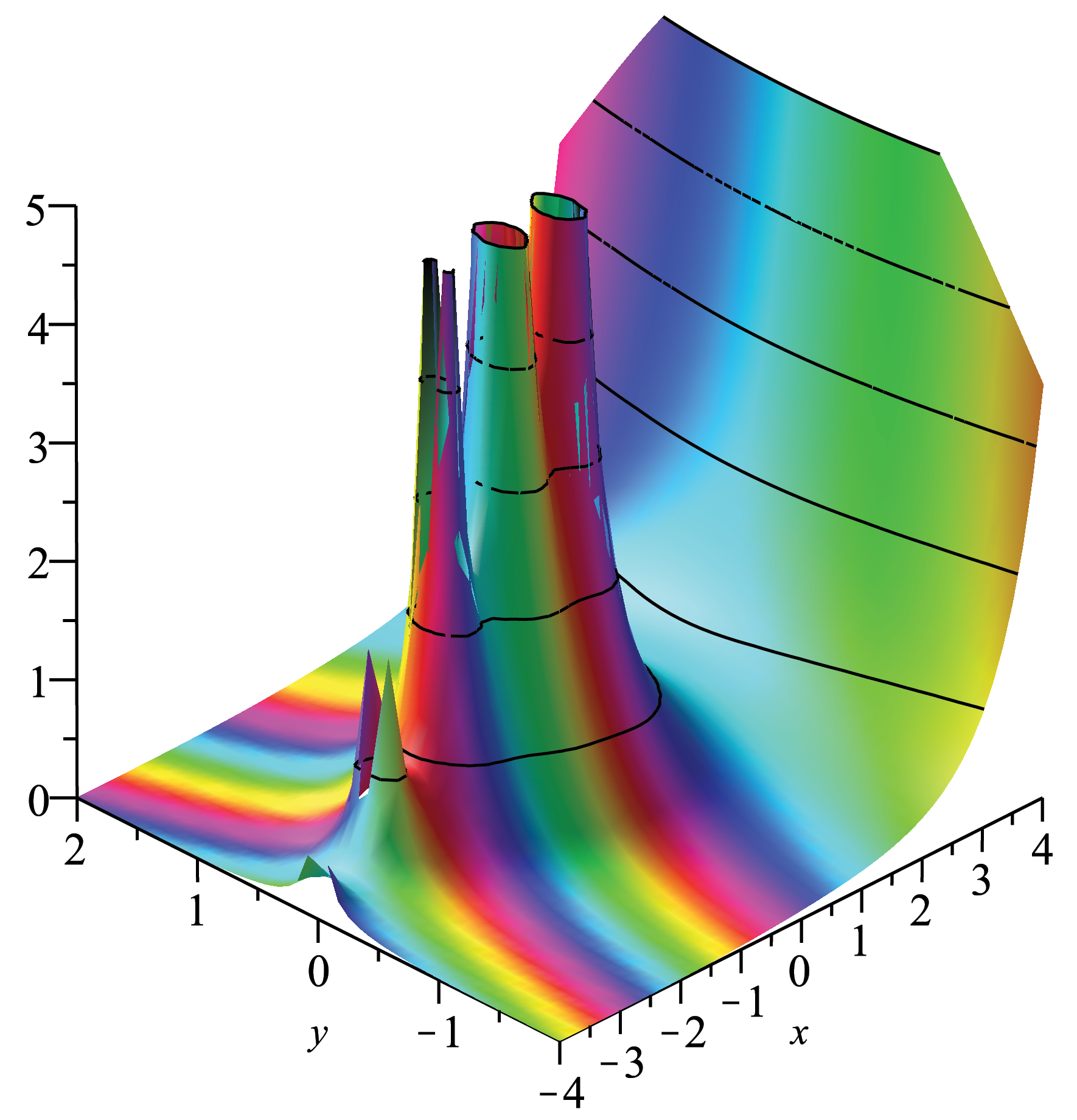}
        \caption{}
        \label{fig:3dplot}
    \end{subfigure}
    \caption{(a) A phase plot of $\Gamma (z)$ using the methods of~\cite{wegert2012}. The colors indicate the argument of $\Gamma (x+iy)$, and because (compared to the inset figure which plots just $\arg z $ where orange-yellow-green-blue-velvet-red in a rainbow palette goes counterclockwise around $z=0$) the colors change in the clockwise direction around $z=-n$ for integers $n \ge 0$, we can see that there are poles there. (b) The famous 3D plot of $|\Gamma(z)|$ from~\cite{jahnkeemde1945} can scarcely be improved upon. We add phase color here; any improvement on the older work seems merely cosmetic.}
\end{figure*}

\iffalse
\begin{figure}[htb!]
	\center
	\includegraphics[scale=0.4]{hadamardsphaseplot}
	\caption{A phase plot of Hadamard's pseudogamma function from equation~\ref{eq:Hadamard}. We detect zeros in the right half plane where the colors as in figure~\ref{fig:phasekey} go clockwise. Verifying this numerically, we find for instance using Maple that $H(2.34652 \pm 1.05516i)\doteq 0$.}
	\label{fig:hadamardsphaseplot}
\end{figure}

\begin{figure}[htb!]
	\center
	\includegraphics[scale=0.4]{hadamards3dplot}
	\caption{This figure shows $|H(z)|$ from equation~\ref{eq:Hadamard}. It is colored by the phase. Because the contours bend left, this suggests that $H(z)$ is not bounded in strips and thus does not satisfy Wielandt's characterization. In fact we know from~\cite{fuglede} that its growth in (say) $1<\operatorname{Re}(z)<2$ as $\operatorname{Im}(z) \to \infty$ must be faster than exponential.}
	\label{fig:hadamards3dplot}
\end{figure}
\fi

\begin{figure*}[t!] \label{fig:figure4}
    \centering
    \captionsetup{font=scriptsize,labelfont={bf}}
    \captionsetup{font=scriptsize}
    \begin{subfigure}[b]{0.5\textwidth}
        \centering
        \includegraphics[height=2.5in]{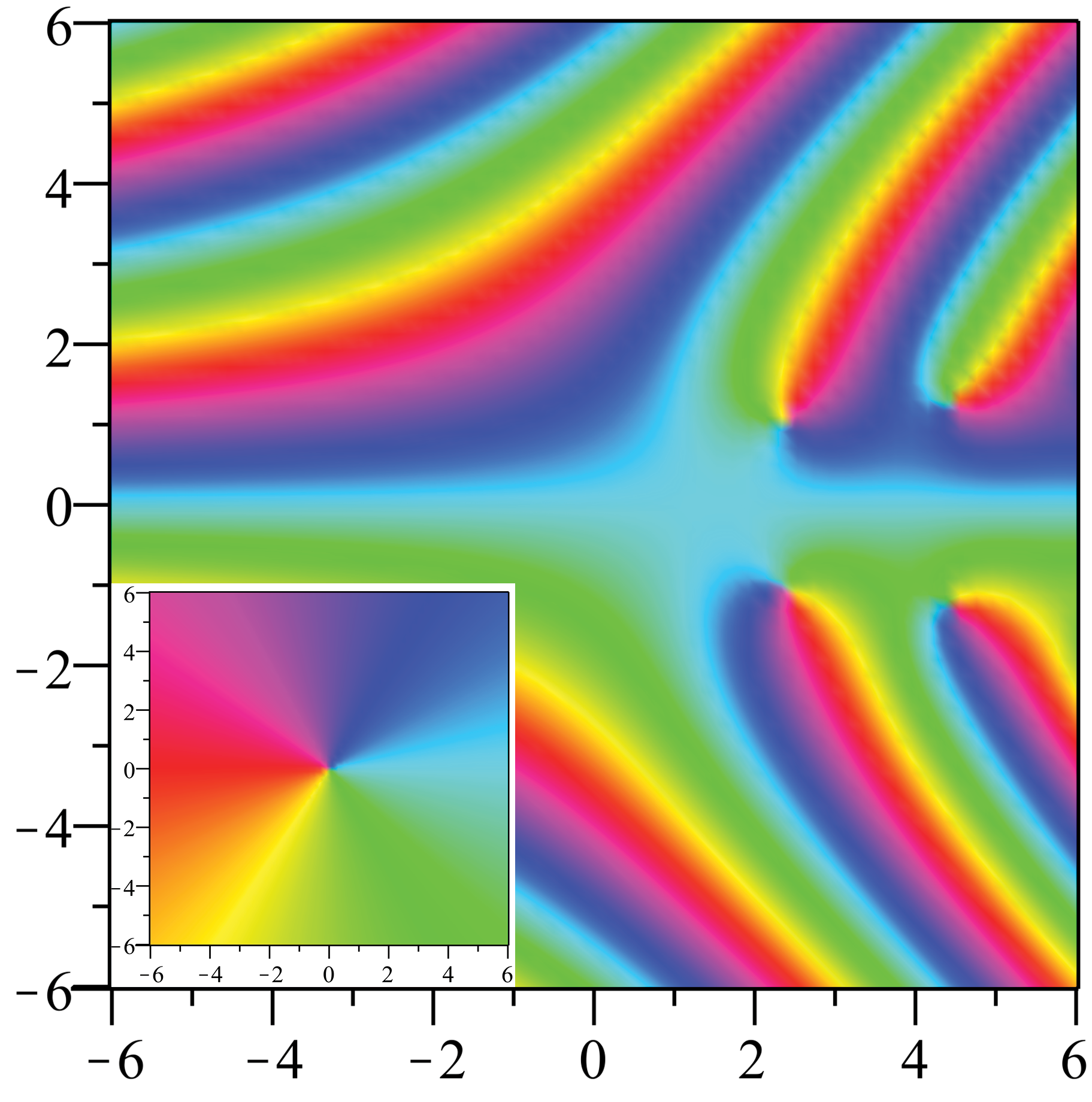}
        \caption{}
        \label{fig:hadamardsphaseplot}
    \end{subfigure}%
    ~ 
    \begin{subfigure}[b]{0.5\textwidth}
        \centering
        \includegraphics[height=2.5in]{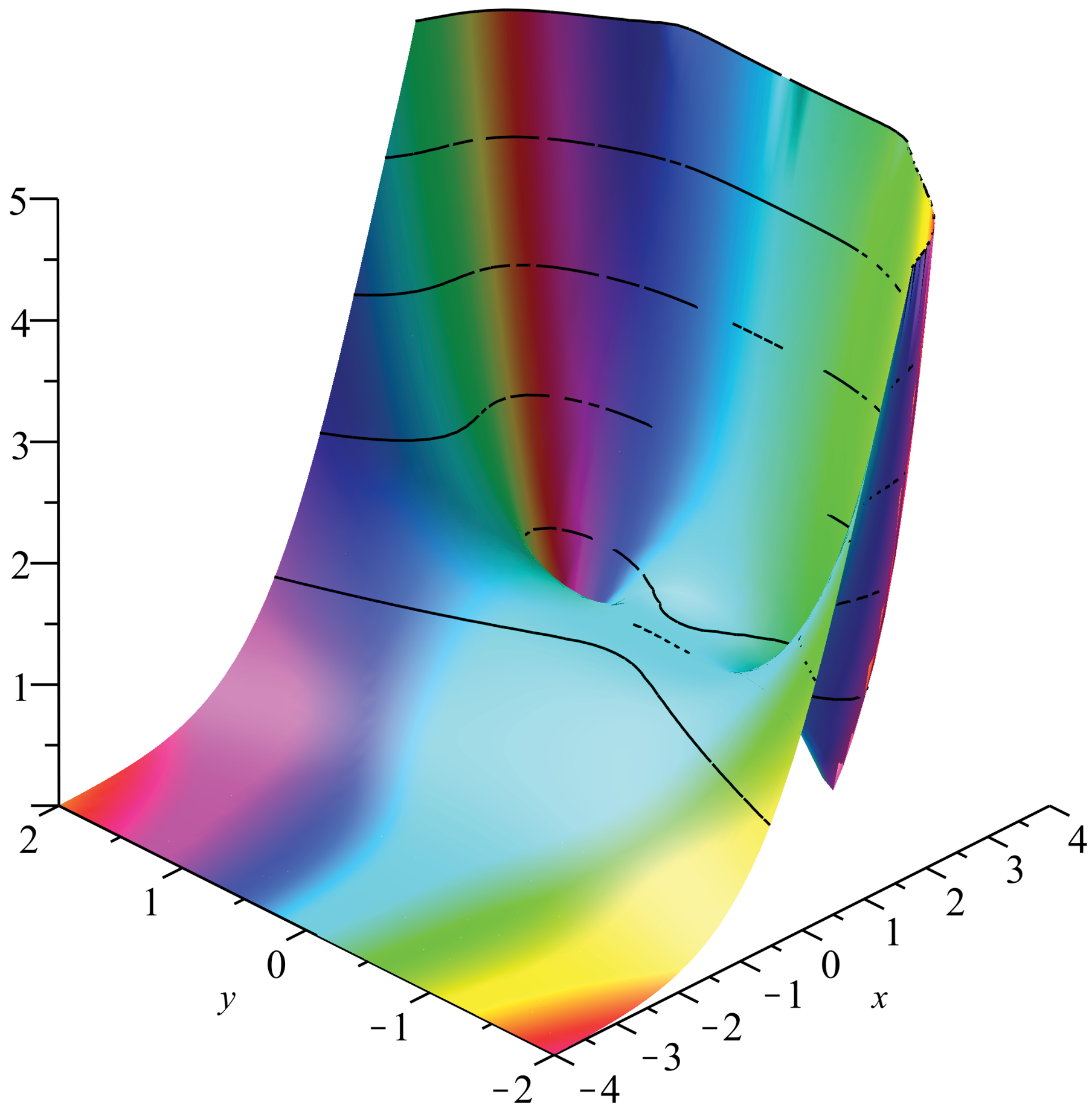}
        \caption{}
        \label{fig:hadamards3dplot}
    \end{subfigure}
    \caption{(a) A phase plot of Hadamard's pseudogamma function from equation~(\ref{eq:Hadamard}). We detect zeros in the right half plane where the colors as in the inset figure go clockwise. Verifying this numerically, we find for instance using Maple that $H(2.34652 \pm 1.05516i)\doteq 0$. (b) This figure shows $|H(z)|$ from equation~(\ref{eq:Hadamard}). It is colored by the phase. Because the contours bend left, this suggests that $H(z)$ is not bounded in strips and thus does not satisfy Wielandt's characterization. In fact we know from~\cite{fuglede} that its growth in (say) $1<\operatorname{Re}(z)<2$ as $\operatorname{Im}(z) \to \infty$ must be faster than exponential.}
\end{figure*}

A similar pair of plots are given for equation~\eqref{eq:13}, in figure 4. We will see more visualizations shortly, including one for a portion of the Riemann surface for $\invG$.

\subsection{Computation}\label{sec:computation}

We did not find in the Monthly any substantive discussion (or even pointers) as to how to compute the $\Gamma$ function, apart from using the fundamental recurrence to push $z$ far enough into the right half-plane that Stirling's asymptotic formula can be used. For a solid discussion of good methods, see~\cite{lanczos1964},~\cite{schmelzer}, and~\cite{spouge1994}. Schmelzer and Trefethen's~\cite{schmelzer} uses the beautiful contour integral 
\begin{equation}
 \frac{1}{\gz} = \frac{1}{2\pi i}\int_{C} s^{-z}\e^s ds \>.
\end{equation} Here the contour C, called a deformed Bromwich contour, winds around the negative real axis counterclockwise. It turns out that the trapezoidal rule (or the midpoint rule) is \textsl{spectrally accurate} for this quadrature, and the contour integral is (almost) entirely practical! See~\cite{schmelzer} for details. \\

\subsection{Inverse $\Gamma$}\label{sssec:num1}

\subsubsection{Plotting the real inverse Gamma function}

As stated earlier, it's surprising in hindsight that there's so few (and only so recent) studies on the inverse Gamma function, $\invG$, and none in the Monthly. We begin with the simplest things. \iffalse We already see an infinity of branches, and an infinity of branch points, namely wherever $\psi(x)=0$. By (very recent!) convention, number these locations by $x_1, x_0, x_{-1}, x_{-2} \ldots$ where $1 < x_1 <2$ and $-k-1 < x_{-k} < -k$ for $k = 0,1,2,\ldots$.\fi Plotting inverse functions with computer software is very simple, even if there are no routines for the inverse function itself. The basic idea is explained in~\cite{snapper}: just exchange $x$ and $y$. Figure~\ref{fig:realinvG} was produced using Maple. The details of the code are available at \url{http://www.apmaths.uwo.ca/~rcorless/frames/Gamma/GammaBorweinCorless.mw}.  \iffalse In detail, if we have a routine for computing $y = f(x)$ given an input $x$, then the plot command to plot $y=f(x)$ might look like the following:
\begin{verbatim}
> plot( [x, f(x), x=a..b], view = [a..b, c..d]);
\end{verbatim} (this is valid Maple parametric plotting syntax if $f$ is defined as an operator) where the intended viewing region is $a\leq x \leq b, c \leq y \leq d \>.$ To see the graph of the inverse function, using just the routine for evaluating $f$, we interchange $x$ and $y$: $x=f(y)$ defines the inverse function (all branches.) Thus the plot becomes:
\begin{verbatim}
> plot( [f(y), y, y=c..d], view = [a..b,c..d]);
\end{verbatim} All that remains is to find a suitable viewing region. For the $\Gamma$ function, because $\Gamma(4) = 3! = 6$, and because the local max on $-1< x <0$ is bigger than $\Gamma (-1/2) = -2\sqrt{\pi} \doteq -3.545$, we choose $-4\leq y \leq 6$ and $-6 \leq x \leq 4$ initially. Plotting $\Gamma(x)$ on this range, and the inverse $\Gamma$ on the corresponding range,
\begin{verbatim}
> plot( [x, GAMMA(x), x = -6 .. 4], view = [-6 .. 4, -4 .. 6], 
color = BLACK, discont = true, axes = framed, labels = ["x", "y"])
\end{verbatim}
\fi

\iffalse
\begin{figure}[htb!]
	\centering
	\includegraphics[scale=0.4]{realgamma}
	\caption{A sketch of a portion of the plot of the real-valued \gf showing some singularities. The plot was produced in Maple 2016 using the code presented in this section.}
	\label{fig:realgammapara}
\end{figure}

\begin{figure}[htb!]
	\centering
	\includegraphics[scale=0.4]{invgamma}
	\caption{A sketch of a portion of the functional inverse of the real-valued \gf showing several branches including the largest. Exact code is in text.}
	\label{fig:realinvG}
\end{figure} This produces figure~\ref{fig:realgammapara}. To produce the inverse, 
\fi

\begin{figure*}[t!]
    \centering
    \captionsetup{font=scriptsize,labelfont={bf}}
    \captionsetup{font=scriptsize}
    \begin{subfigure}[b]{0.5\textwidth}
        \centering
        \includegraphics[height=2.5in]{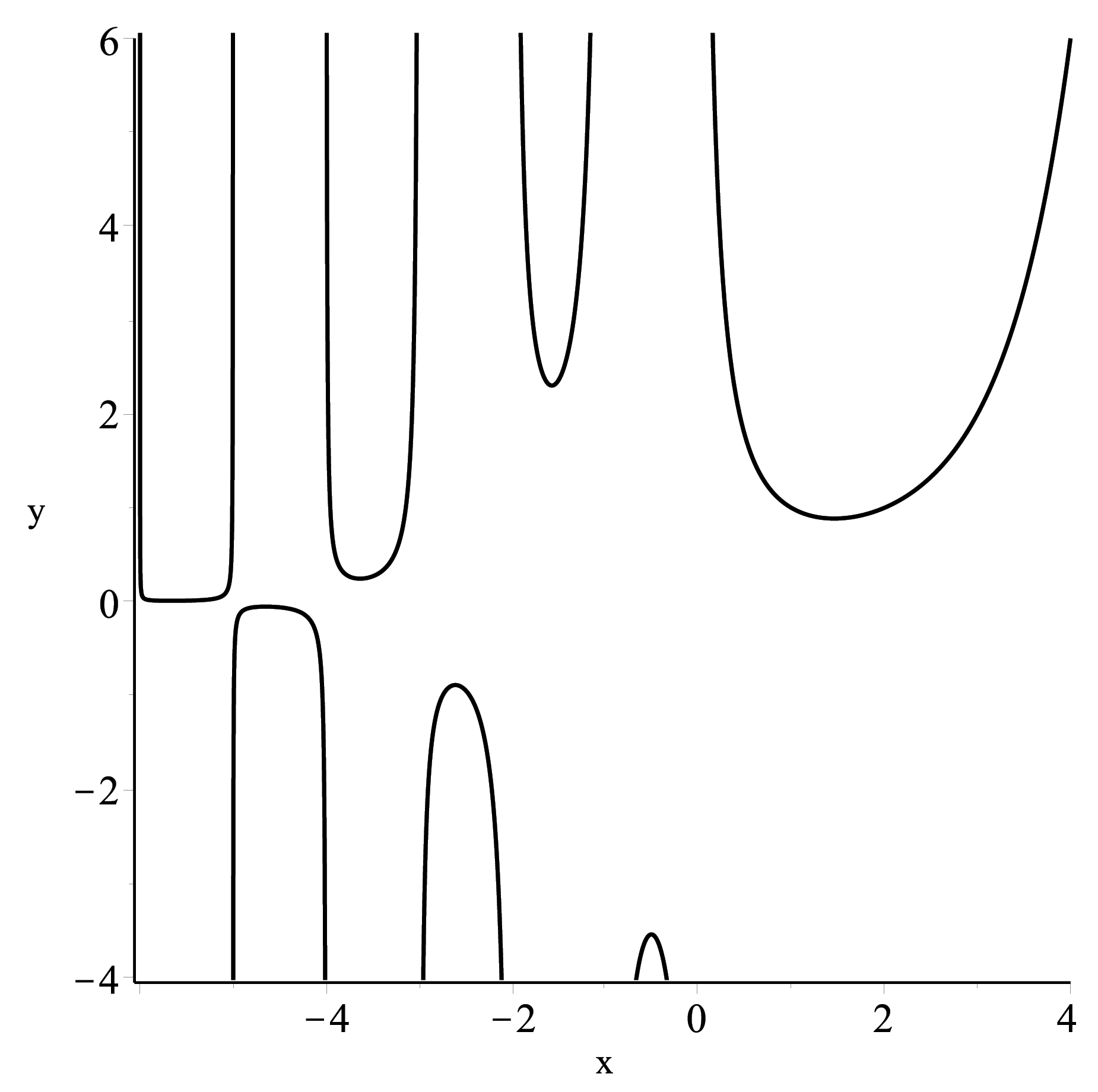}
        \caption{}
        \label{fig:realgammapara}
    \end{subfigure}%
    ~ 
    \begin{subfigure}[b]{0.5\textwidth}
        \centering
        \includegraphics[height=2.5in]{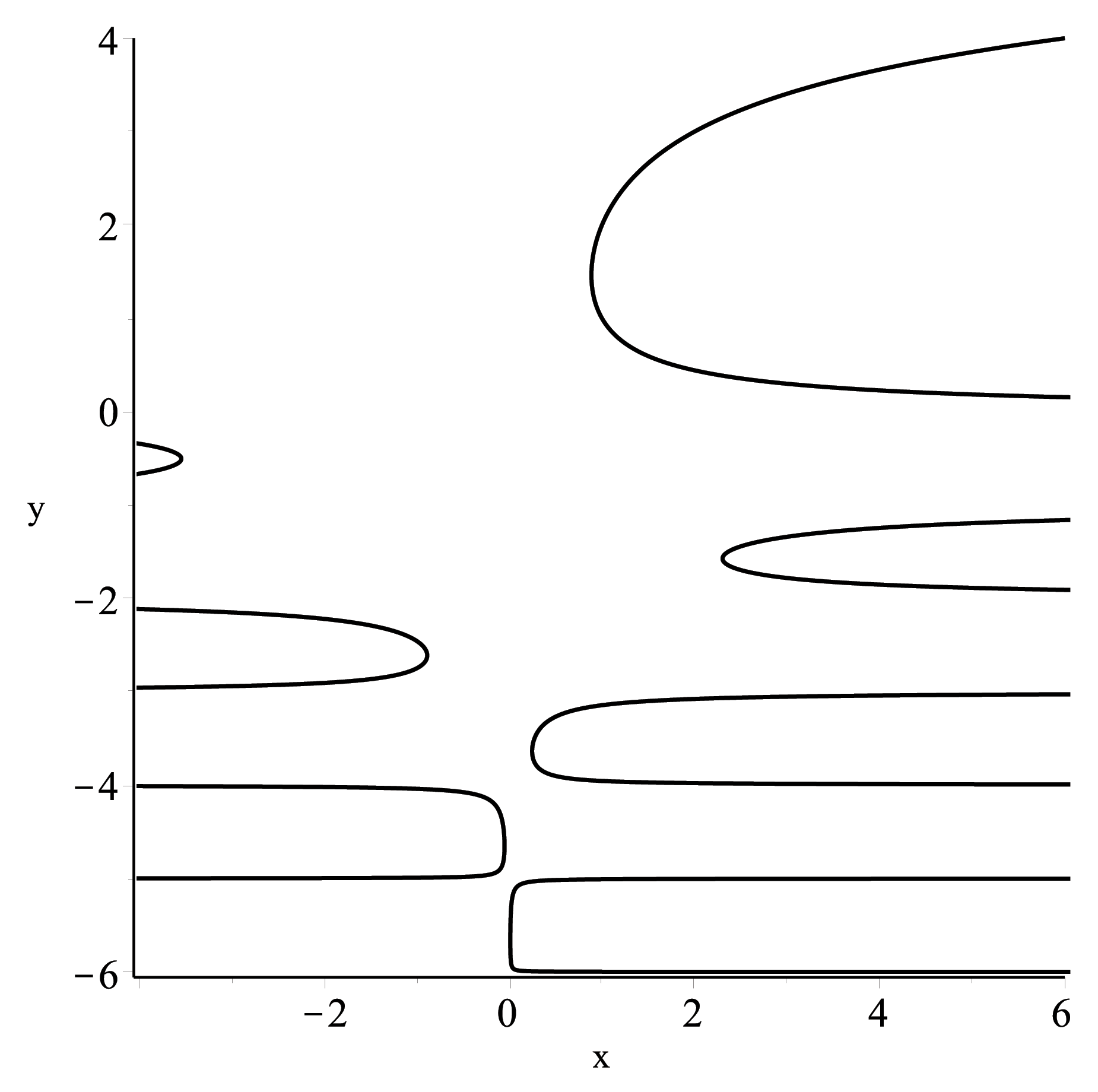}
        \caption{}
        \label{fig:realinvG}
    \end{subfigure}
    \caption{(a) A sketch of a portion of the plot of the real-valued \gf showing some singularities. The plot was produced in Maple 2016 using the code archived at \protect\url{http://www.apmaths.uwo.ca/~rcorless/frames/Gamma/GammaBorweinCorless.mw}. (b) A sketch of a portion of the functional inverse of the real-valued \gf showing several branches including the largest. The plot was produced using Maple 2016. The code is archived at \protect\url{http://www.apmaths.uwo.ca/~rcorless/frames/Gamma/GammaBorweinCorless.mw}}
\end{figure*}

\iffalse
\begin{verbatim}
> plot( [GAMMA(y), y, y = -6 .. 4], view = [-4 .. 6, -6 .. 4], 
color = BLACK, discont = true, axes = framed, labels = ["x", "y"])
\end{verbatim}

This produces figure~\ref{fig:realinvG}. It should be clear how to do this in your favourite problem solving environment.
\fi

In figure~\ref{fig:realgammapara} we see several locations where $\psi (x) = \Gamma'(x) /\Gamma (x)$ is zero: between $x=1$ and $x=2$, (at about $x_{0}=1.4616\ldots$), between $x=-1$ and $x=0$ (at about $x_{-1}=-0.5048\ldots$), between $x=-2$ and $x=-1$ (at about $x_{-2}=-1.5734\ldots$), and so on. Denote $x_{-k}$ the root between $-k$ and $-k+1$, except $x_{0}\in (1,2)$. Each of these locations separates a pair of branches of~$\invG$ in figure~\ref{fig:realinvG}. The branch with~$\invG(x) \geq x_{0}$ is of special interest, because it contains the inverse factorials: $\Gamma (n)=(n-1)!$ means that $\invG(24)=5$, for instance, on this branch. We take up the asymptotics of this branch of $\invG$ in section~\ref{sec:stirling}.

\subsubsection{Plotting the Riemann Surface for the Inverse $\Gamma$ Function}

\iffalse
To prove that this is a true (``honest'') plot of a portion of the Riemann surfaces for inv$\Gamma$, we must show that given $u, v,$ and $y$ we may recover $x$ uniquely (at least in principle). Then, and only then is the plotted surface in a $1-1$ correspondence with the map $(x,y) \rightarrow (u,v)$ where $u+iv = \Gamma(x+iy).$ We have not done this. We note that by coloring the graph according to $x$, we get a true four dimensional picture: any apparent intersection that has differing colors on each sheet is fine\textemdash one then ``ducks'' temporarily.  
\fi

Given a reliable routine to compute a complex function, it is possible to visualize the Riemann surface for inv$f$ automatically~\cite{corlessjeffrey1998graphing,trott2002,trott1997}. The basic idea is the same as above. If $z=x+iy$ and $f(z)=u+iv$, then a Riemann surface for inv$f$, the functional inverse of $f$, can be plotted by plotting $[u,v,y]$ as a parametric plot with $x$ and $y$ as parameters. Note that a routine to compute inv$f$ is not required.

There is one theoretical consideration, namely how to ensure that each point on the representation corresponds uniquely to a point on the four-dimensional map $(x,y)\leftrightarrow(u,v)$, which isn't always possible because not all Riemann surfaces are embeddable in 3D, but this can be mitigated by the use of color as the fourth dimension. There is a practical consideration, as well: namely how to choose the portion of the surface to fit into the viewing frame. For the $\Gamma$ function, there is a further difficulty, namely its extreme dynamic range, even in its imaginary part. 

Because the functional inverse of the reciprocal of $f$ is related simply to the functional inverse of $f$, as follows,
\begin{align}
y = f(x) &\Longleftrightarrow x=\mathrm{inv}f(y)\\
q = 1/f(p) &\Longleftrightarrow p=\mathrm{inv}f(1/q), \phantom{due to} q \neq 0
\end{align} we decided to plot a portion of the Riemann surface for $\invG (1/z)$. See figure~\ref{fig:riemann}. The multilayer convolutions show a rich structure for investigation. Our code is archived at \url{http://www.apmaths.uwo.ca/~rcorless/frames/Gamma/GammaBorweinCorless.mw}.
\iffalse
\begin{verbatim}
f := 1/GAMMA(x+I*y)
u := Re(f)
v := Im(f)
plot3d([u, v, y], x=-3.5 .. 1.5, y=-2.5 .. 2.5, color=x, style=SURFACE,
grid = [20, 20], view = [-2.5 .. 2.5, -2.5 .. 2.5, -2.5 .. 2.5])
\end{verbatim}
\fi

\begin{figure}[htb!]
	\centering
	\includegraphics[scale=0.4]{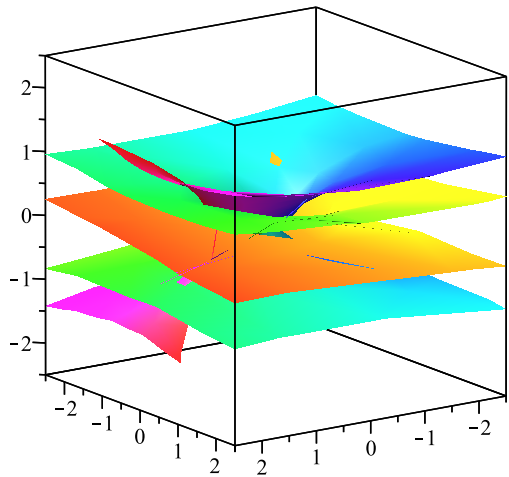}
	\caption{A visualization of a portion of the Riemann surface for inv$\Gamma (1/z)$, showing a rich multilayered structure. This figure may be misleading---more investigation is needed.}
	\label{fig:riemann}
\end{figure}

As a practical matter, constructing \textit{branches} of $\invG$ that people can use seems to be at least as important. A colleague and RMC have completed some preliminary investigations, but we'll leave reporting on this to future work. We note the foundations provided by~\cite{pedersen2015}.

\section{Stirling's Formula}\label{sec:stirling}

Judging by the number of articles in the Monthly on the subject, Stirling's formula approximating $n!$ for large $n$ is by far the most popular aspect of the $\Gamma$ function. There are ``some remarks'', ``notes'', more ``remarks''; there are ``simple proofs'', ``direct proofs'', ``new proofs'', ``corrections'', ``short proofs'', ``very short proofs'', ``elementary'' proofs, ``probabilistic'' proofs, ``new derivations'', and (our favourite title) ``The (n+1)th proof''. The earliest one we can find is~\cite{hummel}; the paper~\cite{ross} is about a different formula\footnote{Which could be applied to $z!$ and $\log\Gamma(1+z)$. See~\cite{davis}.}.\\

The name ``\stf'' turns out not to be historically correct, as we will see. However, there will be no changing the convention. Here is what we mean conventionally by \stf: 
\iffalse
\begin{equation}\label{eq:Stirling}
z! \sim \sqrt{2\pi z} z^z\e^{-z} \bigg( 1 + \sum_{k \geq 1} \frac{1 \cdot 3\cdots(2k-1)\cdot a_{2k-1}}{z^k} \bigg)
\end{equation}or
\begin{equation}
z! \sim \sqrt{2\pi z}z^{z}\e^{-z} \bigg( 1 + \frac{1}{12z} + \frac{1}{288z^{2}} - \frac{138}{51840z^{3}} + \cdots \bigg)
\end{equation}asymptotically of course, as $z\rightarrow \infty \>,$ and where the $a_k$ are given by 
\begin{align}
a_1 &= 1 \nonumber \\ 
a_n &= \frac{1}{n+1}\bigg( a_{n-1} - \sum_{\ell= 2}^{n-1}\ell \> a_\ell \> a_{n+1-\ell}\bigg)
\end{align} (note that only the odd $n$ occur in~\eqref{eq:Stirling} but that the even $n$ are needed in this recurrence). We will see ``closed forms" for the $a_{n}$ later. There is also a logarithmic variant, because the sum above may be written as $\e^r$ where 
\begin{equation}\label{eq:lngammaseries}
r = \sum_{n\geq 1}\frac{B_{2n}}{2n(2n-1)z^{2n-1}} = \frac{1}{12z} + \frac{7}{360z^{3}} + \cdots \>,
\end{equation} where the $B_{2n}$ are Bernoulli numbers (defined in equation~(\ref{eq:bernoulli})): then
\begin{equation} \label{eq:L}
\ln\gz \sim (z-\frac{1}{2})\ln z - z + \frac{1}{2}\ln 2\pi + r \>.
\end{equation} 
\fi
\begin{align}
\ln \Gamma(z) &= \bigg( z-\frac{1}{2} \bigg)\ln z - z + \ln \sqrt{2\pi} + r  \label{eq:L}
\end{align} where $\Gamma$ is asymptotic to a divergent series
\begin{align}
r &\sim \sum_{n \geq 1} \frac{B_{2n}}{2n(2n-1)z^{2n-1}} = \frac{1}{12z} + \frac{7}{360z^{3}} + \cdots \>, \label{r}
\end{align} and the $B_{2n}$ are Bernoulli numbers, with generating function
\begin{equation} \label{eq:bernoulli}
\frac{t}{\e^{t}-1} = \sum_{n \geq 0} \frac{B_{n}}{n!}t^{n} \>.
\end{equation}

Stirling's formula also occurs in an exponentiated form, $\Gamma (z) \sim \sqrt{2\pi}z^{z-\sfrac{1}{2}}\e^{-z}\e^{r}$ and one can easily compute a series for $\e^{r}$ from~(\ref{eq:bernoulli}). It turns out that
\begin{align} \label{eq:e^r}
\e^{r} &\sim 1 + \frac{1}{12z} + \frac{1}{288z^{2}} - \frac{138}{51840z^{3}} + \cdots = 1 + \sum_{k\geq 2} \frac{1 \cdot 3 \cdots (2k-1)a_{2k-1}}{z^{k-1}}
\end{align} with $a_{1}=1$ and for $n>1$
\begin{equation} \label{eq:oddN}
a_{n} = \frac{1}{n+1} \bigg( a_{n-1} - \sum^{n-1}_{\ell = 2} \ell a_{\ell} a_{n+1-\ell} \bigg) \>.
\end{equation} Note that only the odd $n$ occur in~(\ref{eq:e^r}) but that even $n$ \textit{are} needed in the recurrence.

Most people don't bother to distinguish equation~(\ref{eq:L}) from its exponential: both are useful, and both are called Stirling's formula or Stirling's series. Note that equation~\eqref{eq:L} contains only odd powers and thus seems more efficient~than equation~\eqref{eq:e^r}: $\e^{r}$ contains all powers. Exponentiation of a series is simple. See e.g.~\cite[ch.~2]{corlessfillion}.

\subsection{Trapezoidal Rule analyses}

Hummel motivates the paper~\cite{hummel} by claiming 
\begin{quotation}
	``The average college undergraduate, with nothing beyond an elementary calculus course, knows little about infinite series and still less about infinite products, hence gets very little out of one of the usual developments of \stf.''
\end{quotation}

Students were different, then, as was the curriculum. Nevertheless the conclusions may still be true. Hummel then goes on in~\cite{hummel} to use the concavity of logarithm and its consequential inequality from the trapezoidal rule to establish 
\begin{equation} \label{eqn:hummel}
\e^{11/12}\sqrt{n}\hspace{1mm}n^n\e^{-n} < n! < \e\sqrt{n}\hspace{1mm} n^n \e^{-n}
\end{equation} and claims $\e^{11/12} \doteq \sqrt{2\pi} ;$ since $\e^{11/12} \doteq2.5009$ while $\sqrt{2\pi} \doteq 2.5066$ this claim is entirely reasonable. The upper bound is not very tight.

Several Monthly papers such as Nanjundiah's~\cite{nanjundniah} and the highly-cited~\cite{robbins} by Robbins use a similar starting point, though few refer to~\cite{hummel}. The paper~\cite{zeitlin} uses it \textit{backwards} to show Stirling's formula implies the integral for $\ln x$. Using the trapezoidal rule \textsl{is} a natural idea: the trapezoidal rule gives a lower bound on an integral of a concave function. The paper~\cite{johnsonbaugh} uses the trapezoidal rule to establish the existence of the limit---this is not a petty point. The paper~\cite{coleman} uses an average of trapezoids to get both a lower bound and an upper bound for $n!$.

\iffalse
Here is the basic element of the trapezoidal analysis: Because $\ln t$ is concave,
\begin{equation} \label{eq:trapezoidalanalysis}
\frac{1}{2}\ln k + \frac{1}{2} \ln(k+1) < \int_{t=k}^{k+1} \ln t \hspace{2mm} dt \>.
\end{equation}

We now do something elementary, but different to the previous papers. Write 
\begin{align} \label{Copson}
\ln n! &= \sum_{k=1}^{n} \ln k = \sum_{k=1}^{m} \ln k + \sum_{k=m+1}^{n} \ln k
\end{align} and leave $m$ free for the moment: the refinement is that we will do a bit of exact addition of logs, up to $m$, and then use equation~(\ref{eq:trapezoidalanalysis}) only for the second sum.\footnote{This idea is often used with the Euler-Maclaurin sum formula. RMC was taught this by David Boyd, who was JMB's brother's PhD advisor. Small world. (Stirling himself used this idea).} We have the upper bound from the trapezoidal rule (assume $n > m$) 
\begin{equation}
\frac{1}{2}\ln m + \sum_{k=m+1}^{n-1} \ln k + \frac{1}{2} \ln n <\int_{t=m}^{n} \ln t \hspace{2mm} \mathrm{dt}
\end{equation} or
\begin{equation}
\frac{1}{2}\ln m + \sum_{k=m+1}^{n-1} \ln k + \frac{1}{2} \ln n < n \ln n - n -m\ln m + m \>.
\end{equation} Adding 
\begin{equation}
\sum_{k=1}^{m-1} \ln k + \frac{1}{2} \ln m + \frac{1}{2} \ln n  
\end{equation} to both sides, 
\begin{equation}
\sum_{k=1}^{n} \ln k < \sum_{k=1}^{m-1} \ln k +\frac{1}{2} \ln m - m\ln m + m +\frac{1}{2} \ln n + n \ln n -n \>.
\end{equation} Exponentiating, 
\begin{align}
n! < d_{m} \cdot n^{n+1/2} \e^{-n} \>,\\
\shortintertext{where} \nonumber
d_{m}=\frac{m!e^{m}}{m^{m+1/2}} \>.
\end{align}
\fi

Here is the basic element of the trapezoidal analysis: because $\ln t$ is concave, the trapezoidal rule gives a lower bound for the integral:
\begin{equation}
\frac{1}{2} \ln j + \frac{1}{2} \ln (j+1) < \int^{j+1}_{t=j} \ln t \ dt \>.
\end{equation} We now do our own version. Adding up panels of width $1$ from $t=m$ to $t=n$, where $m<n$,
\begin{align}
\frac{1}{2} \ln m + \sum^{n-1}_{j=m+1} \ln j + \frac{1}{2} \ln n < \int^{n}_{t=m} \ln t \ dt = n \ln n - n -m \ln m + m \>.
\end{align} Adding $\sum^{m}_{j=1} \ln j - \ln(m)/2 + \ln(n)/2$ to both sides,
\begin{equation}
\sum^{n}_{j=1} \ln j < n \ln n + \frac{1}{2} \ln n - n + \sum^{m}_{j=1} \ln j - \frac{1}{2} \ln m - m \ln m + m
\end{equation}or 
\begin{equation}
\ln n! < \bigg(n+\frac{1}{2} \bigg) \ln n - n + \ln d_{m}
\end{equation} where
\begin{equation}
d_{m} = \frac{m!\e^{m}}{m^{m+\sfrac{1}{2}}} \>.
\end{equation} Exponentiating,
\begin{equation} \label{eq:exponentiating}
n! < d_{m} n^{n+\sfrac{1}{2}}\e^{-n} \>.
\end{equation}

We've thus replaced the constant $c=\sqrt{2\pi}$ in \stf $\>$ by a computable upper bound. The larger we take $m$, the tighter the bound. When $m=1$ this is the upper bound quoted in~\cite{hummel}. Taking $m=n-1$ gives the tightest possible estimate, but is silly, because to compute $d_{n-1}$ requires $(n-1)!$. We will consider $m \geq n$ later. Taking $m$ small or of moderate size is reasonable in order to estimate $n!$ for large $n$. Taking $m=10$ gives $d_{10} \doteq 2.5276$, whereas $\sqrt{2 \pi} \doteq 2.5066$. Taking $m=20$ gives $d_{20} \doteq 2.5171$. More work shows $d_{m}$ monotonically decreases. 

\iffalse
This discussion was prompted by all the Monthly papers using the trapezoidal rule, and none (so far as we know) using the midpoint rule. In retrospect, it's quite natural and wholly elementary. 

The first pass of this section was written whilst Jon was still alive; more, we had read only the JSTOR archived papers (and such as RMC hand on his shelf). Afterwards, RMC read the papers from 2012--2016 by going to the library. The paper~\cite{dutkay} contains a formula with $(n-1/2)^{n-1/2}\e^{-n}$ at the end, that the authors attributed to Burnside~\cite{burnside}. So the midpoint approach is at least 100 years old.

In fact, it's older. Tweddle's beautiful translation~\cite{tweddle2003} shows very clearly that Stirling himself used the midpoint version! The world ``midpoint" appears in~\cite{tweddle1984}. Stirling didn't invent Stirling's formula: he invented something twice as accurate! It's de Moivre's simpler, less accurate formula that we use today. Nonetheless, the elementary inequalities here appear to be new.

\fi

\subsubsection{Binet's Formula}

We pass so easily from $n!$ to $z!$ nowadays that it's hard to notice the steps in passing from the asymptotics of $n!$ to the asymptotics of $z!$, and of course it's obvious that the formula must be the same\footnote{Some proofs do appear in the Monthly, e.g. Blyth and Pathak~\cite{blyth} use $\Gamma (n+\alpha) \sim n^{\alpha} \Gamma (n)$ to do so.}. Nonetheless , it's satisfying to read in~\cite{whittaker} a proof using Binet's formula, namely (for $\operatorname{Re}z > 0$)
\begin{equation} \label{eq:binet}
\ln \Gamma (z) = \bigg( z - \frac{1}{2} \bigg) \ln z - z + \ln \sqrt{2\pi} + \int_{t=0}^{\infty} \bigg( \frac{1}{2} - \frac{1}{t} + \frac{1}{\e^{t} - 1} \bigg) \frac{e^{-tz}}{t} \ dt \>,
\end{equation} that they really are the same. In the Monthly, we find~\cite{sasvari} which gives a similar proof of~(\ref{eq:binet}) and an intelligible recap of its use to derive the logarithmic variant (\ref{eq:L}) of Stirling's formula, plus bounds guaranteeing the errors in truncation.

\subsection{Other papers}

One of the prettiest papers is~\cite{aissen}. The author, M. I. Aissen, gives a concise but lucid exposition that uses the simple inequality $n!<n^{n}$ and the properties of the ratios $u_{n}=n^{n}/n!$ to make the appearance of $\e$ seem inevitable and completely natural. From there the author gives two proofs of Stirling's formula, up to $\sqrt{2\pi}$.

The shortest paper that we found is~\cite{pippenger1980}: in just 17 lines, including title, references, and address, Pippenger establishes the following ``striking companion to Wallis' product"
\begin{equation}
\frac{\e}{2} = \left( \frac{2}{1} \right)^{1/2} \left( \frac{2}{3} \frac{4}{3} \right)^{1/4} \left( \frac{4}{5} \frac{6}{5} \frac{6}{7} \frac{8}{7}  \right)^{1/8} \cdots
\end{equation} by appeal to Stirling's formula.

The papers~\cite{nanjundiah} and~\cite{maria} give simple refinements of the result from the previously mentioned~\cite{robbins} by Robbins.

The author of~\cite{feller1967}, a famous statistician from Princeton, was ``deeply apologetic" in~\cite{feller1968} for various errors. We point this out not in a spirit of schadenfreude but rather in sympathy; we hope there are no errors in this present work, but.

In~\cite{patin} we find a ``very short proof" that eschews the Central Limit Theorem as claimed to be used in~\cite{blyth} because that ``cannot reasonably be considered elementary." Instead, the author (Patin) uses the Lebesgue dominated convergence theorem! In fact the authors of~\cite{blyth} criticize earlier papers, namely~\cite{khan} and~\cite{wong1977}, for inappropriately using the Central Limit Theorem which they say gives results valid only for the asymptotics of $n!$ for integers $n$. They use moment generating functions instead.

The paper~\cite{pinsky2007} uses the Poisson distribution to get Stirling's formula (including the $\sqrt{2\pi}$). The paper~\cite{lou2014} uses change of variables. The pair of papers by Reinhard Michel~\cite{michel2002} and~\cite{michel2008} use artful estimates of $\ln$ and skilful substitution to get several terms of Stirling's formula in a really elementary way. The second paper has our favourite title, ``The $(n+1)$th proof of Stirling's formula", and takes a swipe at Patin for calling the Lebesgue dominated convergence theorem ``elementary". The letter to the Editor~\cite{nelsen1989} takes a different swipe and claims Patin's proof is contained in Newman's Problem Seminar.

Six years after the $(n+1)$th proof, we have another ``new proof" in~\cite{neuschel}. 

The very impressive paper~\cite{diaconis1986} gives a concise but complete version of Laplace's argument, and a nice historical discussion, including mention of de Moivre.

The paper~\cite{mumma1986} gives an elementary proof that
\begin{equation}
\lim_{n \to \infty} \frac{(n!)^{1/n}}{n} = \frac{1}{\e}
\end{equation} and thereby extends the power of the root test.

The paper~\cite{namias1986} did something quite different, namely use the Legendre duplication formula~(\ref{eq:Legendre}) and generalization to get some new recurrence relations for the logarithmic variant, equation~(\ref{eq:L}). This generated the follow-up paper~\cite{deeba1991} because this gave new formulae for Bernoulli numbers. 

The paper~\cite{eger} extends Stirling's formula for use with a certain generalization of central binomial coefficients. Its focus seems to be statistical, and it uses discrete random variables and the Central Limit Theorem. 

The paper~\cite{marsaglia} is not without flaws, and indeed was criticized in~\cite{grossman} for not being as ``new'' as its title claimed\footnote{We found~\cite{grossman} by browsing the Monthly, not by citation search. Its criticism of~\cite{marsaglia} is largely sound: Copson's treatment is better in many respects. }, having been anticipated by the textbook treatment in~\cite{copson}. Indeed,~\cite{copson} points further back, to Watson and Ramanujan for the main idea. Nonetheless~\cite{marsaglia} is an interesting and readable paper, and the recurrence relation developed there, which we have already presented in equation~(\ref{eq:oddN}), is apparently not in~\cite{copson} and seems to be new to the paper. This paper was discussed in some detail in~\cite{BorweinCorless1999}.
One can say more, nowadays: in~\cite{brassesco} and~\cite{nemes} we find ``explicit" formulas in terms of 2-associated Stirling numbers, and David Jeffrey reports (personal communication) the following expression:
\begin{align}
1 \cdot 3 \cdot \cdot 5 &\cdots (2n-1)a_{n}= \nonumber\\ 
&\bigg( \frac{n}{2} +1 \bigg)^{\overline{n-1}} \sum^{n-1}_{k=1} \frac{(-1)^{k}}{(n-k-1)!} \frac{2^{k}}{(n+2k)!} \begin{bmatrix} n+2k-1 \\ k \end{bmatrix}_{\geq 2} \>.
\end{align} Here $\begin{bmatrix} m \\ n \end{bmatrix}_{\geq2}$ means a $2$-associated Stirling cycle number. It is gratifying to see Stirling numbers in Stirling's asymptotic series. [Even if it's really due to de Moivre].

\subsection{Gaps: midpoint rule, Stirling's original formula, and asymptotics of $\invG$} 

\mbox{}

We saw in a previous section that several papers used the trapezoidal rule to estimate $\ln n!$ by $\int^{n}_{1} \ln t \ dt$; we used this to give a parameterized upper bound, namely equation~(\ref{eq:exponentiating}).

\iffalse
\begin{equation}
n! < d_{m} n^{n+1/2}\e^{-n}
\end{equation} where
\begin{equation}
d_{m} = \frac{m!\e^{m}}{m^{m+1/2}} \>.
\end{equation}
\fi

Nowhere in the Monthly that we have seen does the \textsl{midpoint rule} appear to be used to give a lower bound for $n!$. The following theorem rectifies this omission.
 
\begin{theorem}

 If $1 \leq k \leq n-1$, and
\begin{equation}
c_{k} = \frac{k ! \e^{k + \sfrac{1}{2}}}{(k + \sfrac{1}{2})^{k + \sfrac{1}{2}}} \>,
\end{equation} then
\begin{equation}
c_{k} ( n + \sfrac{1}{2})^{n+\sfrac{1}{2}}\e^{-(n+\sfrac{1}{2})} < n! \>.
\end{equation}

\end{theorem}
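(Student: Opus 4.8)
The plan is to mirror the trapezoidal-rule argument given just above the theorem, but to replace the trapezoidal rule (which \emph{under}estimates the integral of a concave function, and so produced an upper bound for $n!$) by the midpoint rule (which \emph{over}estimates the integral of a concave function, and so should produce a lower bound). The organizing observation is that, after exponentiating, the claimed inequality $c_{k}(n+\frac{1}{2})^{n+\frac{1}{2}}\e^{-(n+\frac{1}{2})} < n!$ is equivalent to $c_{k} < c_{n}$, where $c_{n}=n!\,\e^{n+\frac{1}{2}}/(n+\frac{1}{2})^{n+\frac{1}{2}}$; that is, the theorem simply asserts that $c_{k}$ is strictly increasing in $k$. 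I would nonetheless prove the integral inequality directly, since it makes the midpoint-rule structure transparent and parallels the preceding exposition.

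First I would record the midpoint inequality for $\ln$. Because $\ln t$ is strictly concave (as $(\ln t)'' = -1/t^{2} < 0$), the tangent line to $\ln$ at the midpoint $j$ of the panel $[j-\frac{1}{2},\,j+\frac{1}{2}]$ lies strictly above $\ln t$ except at $t=j$. Integrating the tangent line over this symmetric panel, the linear part cancels and leaves exactly $\ln j$, so
\[
\int_{j-\frac{1}{2}}^{j+\frac{1}{2}} \ln t \ dt < \ln j \>.
\]
This is the exact analogue of the trapezoidal inequality used above, with the sense of the inequality now reversed.

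Next I would sum this over the integer midpoints $j=k+1,k+2,\ldots,n$. The left-hand sides combine into a single integral, while the right-hand sides give $\sum_{j=k+1}^{n}\ln j = \ln(n!)-\ln(k!)$, whence
\[
\int_{k+\frac{1}{2}}^{n+\frac{1}{2}} \ln t \ dt < \ln(n!) - \ln(k!) \>.
\]
Evaluating the integral with the antiderivative $t\ln t - t$ and rearranging to isolate $\ln(n!)$ on the right, then exponentiating, reproduces the factor $c_{k}=k!\,\e^{k+\frac{1}{2}}/(k+\frac{1}{2})^{k+\frac{1}{2}}$ together with $(n+\frac{1}{2})^{n+\frac{1}{2}}\e^{-(n+\frac{1}{2})}$, which is precisely the stated bound.

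The argument is essentially routine once the midpoint inequality is in hand, so there is no deep obstacle; the one point requiring care is the \emph{direction} and \emph{strictness} of that inequality. I would take pains to justify that the midpoint rule overestimates (not underestimates) the concave integrand---this is exactly the tangent-line-above-the-graph fact---and that strict concavity of $\ln$ makes each panel inequality strict, so that the summed inequality, and hence the final bound, is strict for every admissible $k$. The hypothesis $1 \leq k \leq n-1$ is what guarantees the summation range $j=k+1,\ldots,n$ is non-empty, which is precisely what is needed to conclude strictness rather than mere inequality.
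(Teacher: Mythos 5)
Your proposal is correct and follows essentially the same route as the paper's own proof: the strict midpoint-rule inequality $\int_{j-\sfrac{1}{2}}^{j+\sfrac{1}{2}} \ln t \ dt < \ln j$ for the concave logarithm, summed over panels from $k+\sfrac{1}{2}$ to $n+\sfrac{1}{2}$, with $\sum_{j=1}^{k}\ln j$ restored and the result exponentiated. Your only additions---the tangent-line justification of the midpoint inequality (which the paper relegates to a textbook citation) and the observation that the claim is equivalent to monotonicity of $c_{k}$---are harmless elaborations, not a different argument.
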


\begin{proof} 

For a concave function, the midpoint rule gives an upper bound\footnote{This is in several texts, including~\cite{corlessfillion}--- but one place to learn it is by teaching from~\cite{stewart2015}.}, so if $j \geq 1$
\begin{equation}
\int_{j-\sfrac{1}{2}}^{j+\sfrac{1}{2}} \ln t \ dt < \ln j \>.
\end{equation} Using panels of width 1, if $k<n$,
\begin{equation}
\int^{n+\sfrac{1}{2}}_{k+\sfrac{1}{2}} \ln t \ dt < \sum^{n}_{j=k+1} \ln j \>.
\end{equation} Adding $\sum_{j=1}^{k} \ln j$ to both sides, 
\begin{align}
(n+\sfrac{1}{2})\ln(n+\sfrac{1}{2}) - (n+\sfrac{1}{2}) &- (k+\sfrac{1}{2})\ln(k+\sfrac{1}{2}) + (k+\sfrac{1}{2})\\ 
&+ \sum^{k}_{j=1} \ln j < \sum^{n}_{j=1} \ln j \nonumber
\end{align} or
\begin{align}
\ln c_{k} + (n+\sfrac{1}{2})\ln(n+\sfrac{1}{2})-(n+\sfrac{1}{2}) < \ln n!
\end{align} Exponentiating gives the theorem. \end{proof}

\begin{remark}
We now consider what happens when $k = n$ or $m=n$: the inequalities become equality, and we learn $n!=n!$ which is not a surprise. But what if $k > n$, or $m > n$? In those cases the sense of the inequalities \textit{reverse} because the integrals change sign: 
\begin{equation}
d_{m}n^{n+\sfrac{1}{2}}\e^{-n} < n! < c_{k} (n+\sfrac{1}{2})^{n+\sfrac{1}{2}}\e^{-(n+\sfrac{1}{2})}
\end{equation} if both $m>n$ and $k > n$. One can establish also that $d_{m}$ decreases and is bounded below by $\sqrt{2\pi}$, while $c_{k}$ increases and is bounded above by $\sqrt{2\pi}$; we have
\begin{equation}
\sqrt{2\pi} n^{n+\sfrac{1}{2}}\e^{-n} < n! < \sqrt{2\pi}(n + \sfrac{1}{2})^{n+\sfrac{1}{2}}\e^{-(n+\sfrac{1}{2})} \>.
\end{equation} These bounds and the finitary bounds of the theorem, i.e. 
\begin{equation}
c_{k}\bigg( n+\frac{1}{2} \bigg)^{n+\frac{1}{2}}\e^{-(n+\frac{1}{2})} < n! < d_{m}n^{n+\frac{1}{2}}\e^{-n}
\end{equation} if $k, m<n,$ are similarly tight to those produced by examining the error term in the trapezoidal rule (e.g.~\cite{hummel}).
\end{remark}

\begin{remark}
The first draft of this paper contained the phrase ``we believe this is new, although it's difficult to be sure." Jon read that and was skeptical. It turns out that he was correct in part, but the story is interesting. Our first real clue was the paper~\cite{dutkay} which gives, at the end, an approximation containing the characteristic factor $(n-1/2)^{n-1/2}$ (for $\Gamma (n)$, not $n!$). They attribute this result to Burnside~\cite{burnside}, in $1917$ (a hundred years ago!). But the form is much older than that, even: \textsl{Stirling himself} gives (using ``$\ell$", to denote the base--10 logarithm)
\end{remark}
\begin{equation}
z \ell, z - az - \frac{a}{24z} + \frac{7a}{2880z^3} - \&c \>.
\end{equation} ``added to half the logarithm of the circumference of the circle whose radius is one" in example 2 of Proposition 28~\cite[p.~151]{tweddle2003}. This is an expression for $\log_{10}(n!)$ but here $\boldsymbol{z=n+1/2}$; the constant $a$ is $1/\ln(10)$ and this means, in modern notation,
\begin{align}
\ln n! \sim \ln\sqrt{2\pi} + \bigg( &n + \frac{1}{2} \bigg) \ln \bigg( n + \frac{1}{2} \bigg) \\
&- \bigg( n + \frac{1}{2} \bigg) - \frac{1}{24(n+1/2)} + \frac{7}{2880(n+1/2)^{3}} - \cdots \>.
\end{align} This is not equation (\ref{eq:L})! This is, as Tweddle points out in~\cite{tweddle1984}, a mid-point formula! Stirling did not explicitly use the midpoint rule or the concavity of logarithm to establish an inequality; instead he used telescoping series, induction, and a (clear and algorithmic) recipe for computing more terms to give \textit{his} series. Apparently, de~Moivre was the first to give the simpler version that we call ``Stirling's formula" today, namely equation (\ref{eq:L}).

The following theorem can be proved using very similar steps as equation~(\ref{eq:binet}) is proved in~\cite{whittaker} and~\cite{sasvari} (and is rather fun to do). We leave this for the reader. 

\begin{theorem}
For $\operatorname{Re}z > 1/2$,
\begin{align}
\ln \Gamma(z) &= (z-1/2) \ln (z-1/2) - (z-1/2) + \ln \sqrt{2\pi} \nonumber \\ 
&+ \int^{\infty}_{t=0} \frac{1}{t} \bigg( \frac{1}{t} - \frac{1}{2\sinh t/2} \bigg) \e^{-(z-1/2)t} \ dt \>.
\end{align}
\end{theorem}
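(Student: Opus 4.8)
The plan is to mirror the derivation of Binet's formula \eqref{eq:binet} in \cite{whittaker} and \cite{sasvari}, but to centre every expansion at $z-\tfrac12$ rather than at $z$. Write the claimed identity as $\ln\Gamma(z) = M(z) + I(z)$, where $M(z) = (z-\tfrac12)\ln(z-\tfrac12) - (z-\tfrac12) + \ln\sqrt{2\pi}$ collects the elementary ``main terms'' and $I(z)$ is the integral in the statement. Rather than attack $\ln\Gamma$ head on, I would first establish the identity after differentiating \emph{twice} in $z$, where the logarithms collapse to rational functions, and then integrate back, fixing the two constants of integration by Stirling's formula \eqref{eq:L}.

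The two building blocks come from differentiating and from a single trigonometric rewrite. Using equation \eqref{eq:alsoInDavis},
\[
\frac{d^2}{dz^2}\ln\Gamma(z) = \psi'(z) = \sum_{n\ge0}\frac{1}{(z+n)^2} = \int_0^\infty \frac{t\,\e^{-zt}}{1-\e^{-t}}\,dt,
\]
the last step from $\tfrac{1}{(z+n)^2} = \int_0^\infty t\,\e^{-(z+n)t}\,dt$ and summation of the geometric series. The centring at $z-\tfrac12$ is produced by $\dfrac{t}{1-\e^{-t}} = \dfrac{t\,\e^{t/2}}{2\sinh(t/2)}$, whence
\[
\psi'(z) = \int_0^\infty \frac{t\,\e^{-(z-1/2)t}}{2\sinh(t/2)}\,dt,
\qquad
\frac{1}{z-\tfrac12} = \int_0^\infty \e^{-(z-1/2)t}\,dt \quad(\operatorname{Re}z>\tfrac12).
\]
Since $\tfrac{d^2}{dz^2}M(z) = \tfrac{1}{z-1/2}$, subtracting gives the clean second-order identity $\frac{d^2}{dz^2}\bigl(\ln\Gamma(z)-M(z)\bigr) = \psi'(z) - \frac{1}{z-\tfrac12}$, which the two displays above express as a single integral over $t$ with a kernel built from $t/(2\sinh(t/2))$.

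Next I would integrate this twice in $z$. Each integration restores a factor $t^{-1}$ from $\e^{-(z-1/2)t}$, so integrating twice reintroduces $t^{-2}$ and, after collecting terms, reproduces (up to an affine function $az+b$) precisely the kernel $\tfrac1t\bigl(\tfrac1t - \tfrac{1}{2\sinh(t/2)}\bigr)$ of the theorem; thus $\ln\Gamma(z)-M(z) = I(z) + (az+b)$. To pin down $a,b$ I would let $z\to+\infty$ along the real axis. The kernel of $I$ is $O(1)$ as $t\to0^+$ (it tends to $\tfrac1{24}$) and $O(t^{-2})$ as $t\to\infty$, hence integrable, so dominated convergence gives $I(z)\to0$; re-expanding Stirling's formula \eqref{eq:L} about $z-\tfrac12$ shows $\ln\Gamma(z)-M(z)\to0$ as well. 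An affine function tending to $0$ at $+\infty$ is identically $0$, so $a=b=0$. Both sides being holomorphic on $\operatorname{Re}z>\tfrac12$ (the integral converges absolutely and analytically there), the identity proved on the real axis then extends to the whole half-plane by the identity theorem.

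The step demanding the most care — and the one I would not hide behind ``routine calculation'' — is the sign bookkeeping in the last reduction: determining the exact sign with which the kernel $\tfrac1t\bigl(\tfrac1t-\tfrac{1}{2\sinh(t/2)}\bigr)$ enters after the double integration, and confirming $a=b=0$. Both are decided entirely by the asymptotic matching, so I would make the Watson's-lemma estimate of $I(z)$ and the re-expansion of Stirling's series about $z-\tfrac12$ fully explicit, and I would separately justify differentiating under the integral sign (uniform convergence on compact subsets of $\operatorname{Re}z>\tfrac12$, using the $t\to0$ and $t\to\infty$ bounds on the kernel). Everything else is the elementary centring algebra that turns $1-\e^{-t}$ into $2\sinh(t/2)$.
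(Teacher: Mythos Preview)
Your plan is sound and will work. The key idea---the centring identity $t/(1-\e^{-t})=t\e^{t/2}/(2\sinh(t/2))$ that shifts $z$ to $z-\tfrac12$---is exactly the one the paper has in mind. But you take a genuinely different route to package it.

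The paper says to imitate the Whittaker--Watson/Sasv\'ari proof of Binet's formula~\eqref{eq:binet}. That proof works at the level of the \emph{first} derivative: one starts from the Frullani-type integral for $\psi(z)=\Gamma'(z)/\Gamma(z)$, performs the $\sinh$ rewrite, subtracts an integral representation of $\ln(z-\tfrac12)$, and integrates \emph{once} in~$z$. The single constant of integration is then pinned down by a special value (in effect $\Gamma(\tfrac12)=\sqrt\pi$) together with the identification of the definite integral $\int_0^\infty t^{-1}\bigl(t^{-1}-(2\sinh(t/2))^{-1}\bigr)\,dt$ as $\ln\sqrt2$. Your version instead starts from $\psi'(z)$ via~\eqref{eq:alsoInDavis}, integrates \emph{twice}, and fixes both constants asymptotically from Stirling's formula~\eqref{eq:L}.

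What each buys: your second-derivative start is cleaner because $\psi'$ has a tidy Laplace integral with no Frullani subtraction, and you avoid having to recognize that $\ln\sqrt2$ constant. The paper's first-derivative route, on the other hand, does not need to invoke Stirling's formula~\eqref{eq:L} as an input, so it is more self-contained if one's aim is to \emph{derive} Stirling's original series from scratch; your argument presupposes~\eqref{eq:L}, which is fine here since the paper has already established it. Your caution about the sign bookkeeping in the double integration is well placed---carry it out carefully and you will see exactly which sign the kernel acquires.
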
 This should be contrasted to equation~(\ref{eq:binet}); the known series expansion for $\csch (z)$ (see e.g.~\citep{abramowitz1966}) can then be used to fully develop Stirling's original formula, after an appeal to Watson's lemma~\cite{bender1999}:
\begin{align}
\int^{\infty}_{t=0} \frac{1}{t} \bigg( \frac{1}{t} &- \frac{1}{2 \sinh t/2} \bigg) \e^{-(z-1/2)t} \ dt \nonumber \\ 
&\sim \sum_{n \geq 1} \frac{(1-2^{1-2n})}{2n(2n-1)} \frac{B_{2n}}{(z-1/2)^{2n-1}} \\
&\sim \frac{-1}{24(z-1/2)} + \frac{7}{2880(z-1/2)^{3}} - \frac{31}{40320(z-1/2)^{5}} + \cdots
\end{align}

Burnside~\cite{burnside} gave, by entirely different means, the formula
\begin{align}
\ln \Gamma (N+1) &= (N+1/2)\ln(N+1/2)-N+\ln \sqrt{\frac{2\pi}{\e}} \nonumber\\
&- \sum_{r \geq 1} \sum_{n \geq N +1} \frac{1}{2r(2r+1)2^{2r}n^{2r}} \>,
\end{align} which, being convergent, gives therefore a double sum for this Binet-like integral when $-1+z=N$ is an integer. Burnside did not comment on the similarity to Stirling's original formula. He did, however, compute a constant
\begin{equation}
C = \frac{1}{2} \ln 2 + \sum_{r \geq 1} \sum_{n \geq 1} \frac{1}{2r(2r+1)2^{2r}n^{2r}}
\end{equation} as ``to six figures" as $0.418938$, and then identified it as $\ln \sqrt{2\pi/\e}=0.418938533\ldots$ using ``Stirling's Theorem", but using $N^{N+1/2}$ not $(N+1/2)^{N+1/2}$. This suggests that he did not realize that his formula was actually closer to Stirling's original.

\subsubsection{Asymptotics of inv$\Gamma$}

The first few terms of Stirling's original formula give
\begin{equation}
\ln \Gamma (z) = \bigg( z- \frac{1}{2} \bigg) \ln \bigg( z- \frac{1}{2} \bigg) - \bigg( z - \frac{1}{2} \bigg) + \ln \sqrt{2\pi} + \mathcal{O}\bigg( \frac{1}{z} \bigg) \>.
\end{equation} We wish to solve $x=\Gamma (y)$ for $y$ when $x$ is large.

Put $v=x/\sqrt{2\pi}$ and $u = y -  1/2$. Then the formula above gives
\begin{equation}
\ln v = u \ln u - u + \mathcal{O} (1/u) \>.
\end{equation} Ignoring the $\mathcal{O}(1/u)$ term, we have
\begin{equation}
\ln v \doteq u \ln u - u =u(\ln u -1) = u \ln \frac{u}{\e}
\end{equation} and, somewhat remarkably, we can solve this for $u$, as follows:
\begin{align}
\frac{\ln v}{\e} &= \frac{u}{\e} \ln \frac{u}{\e} = \bigg( \ln \frac{u}{e} \bigg) \e^{\ln u/ \e} \>
\end{align} so (using the Lambert $W$ function)
\begin{align}
\ln \frac{u}{\e} &= W \bigg ( \frac{\ln v}{\e} \bigg) \>.
\end{align} Exponentiating,
\begin{align}
\frac{u}{\e} &= \e^{W(\ln v/\e)} = \frac{\ln v/\e}{W\bigg( \frac{\ln v}{\e} \bigg)} \>. \\
\shortintertext{Therefore}
y_{0} &= \frac{1}{2} + \frac{\ln (x/\sqrt{2\pi})}{W \big( \frac{1}{\e} \ln (x/\sqrt{2\pi}) \big)} \>. \label{eq:asymptInvGo}
\end{align} This formula approximately inverts $\Gamma$, so $x \doteq \Gamma (y_{0})$. Using James Stirling's own example, $1000!$ or $\Gamma (1001) \doteq 4.0238 \cdot 10^{23567}$, the formula above gives $1000.99999397$, a relative error of about $6 \cdot 10^{-9}$, from the computed value of $1000!$. 

The branch of $W$ used above is the principal branch, $W_{0}(x)$, defined for $-1/\e \leq x$ and satisfying $W_{0}(x) \geq -1$. Thus $\ln(x/\sqrt{2\pi})/\e \geq -1/\e$, or $x \geq \sqrt{2\pi}/\e$.
\iffalse
\begin{align}
\shortintertext{or}
-1 &\leq \ln \bigg( \frac{x}{\sqrt{2\pi}} \bigg) \\
\shortintertext{or}
\e^{-1} &\leq \frac{x}{\sqrt{2 \pi}}
\shortintertext{or}
x &\geq \frac{\sqrt{2 \pi}}{\e} \>.
\end{align} 
\fi
\\
\noindent At $x=\sqrt{2\pi}/\e$, $y_{0} = 1/2 - 1/W_{0}(-1/\e) = 1/2 + 1 = 3/2$. This states that $x=\Gamma (y_{0})$ should be approximately true when $x=\sqrt{2\pi}/\e$ and $y_{0}=3/2$. We can check this: $\Gamma(3/2) = \Gamma(1/2)/2=\sqrt{\pi}/2$. Now $\sqrt{2\pi}/\e \doteq 0.922137$, $\sqrt{\pi}/2 \doteq 0.88622$. These differ only by about 4\%. Note that this accuracy is attained at the lowest possible value of $x$ for which the formula is valid at all, and that (as we shall see) the error decays reasonably rapidly as $x \to \infty$. This is very nearly a universal formula. Of course this is because Stirling's formula is so accurate\footnote{We remind the reader that divergent series are often remarkably accurate. There are two relevant limits, only one of which might be bad. However, Stirling's formula is famously accurate, more so than most. And Stirling's \textit{original} formula is even better, because the midpoint rule is twice as accurate.}.

But it gets better. For $-1/\e \leq \ln(x/\sqrt{2\pi})/\e < 0$, we may use $W_{-1}(x)$ as well. That is, on
\begin{equation}
\frac{\sqrt{2\pi}}{\e} \leq x < \sqrt{2\pi}
\end{equation} the formula
\begin{equation} \label{eq:asymptInvGM}
y^{(-)} = \frac{1}{2} + \frac{\ln(x/\sqrt{2\pi})}{W_{-1} \bigg( \frac{1}{\e} \ln \bigg(\frac{x}{\sqrt{2\pi}} \bigg) \bigg)}
\end{equation} also inverts Stirling's original formula. Plotting, on the same graph as $\invG$, we get the picture in figure~\ref{fig:asymptInvG}. The agreement is astonishing.

\begin{figure}[htb!]
	\centering
	\includegraphics[scale=0.35]{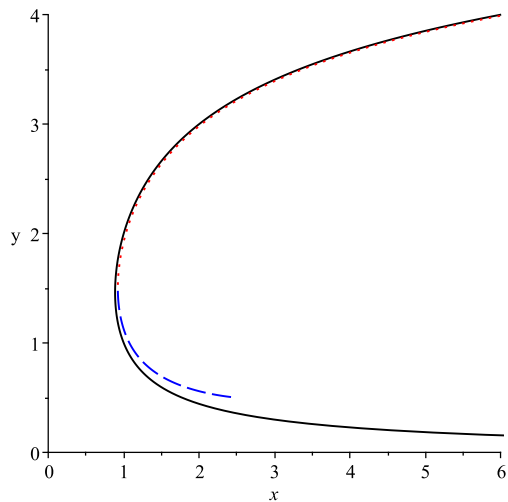}
	\caption{A plot showing that inverting Stirling's original approximation for $n!$ gives an astonishing accuracy even for small $x$; by using both real branches of the Lambert~$W$ function, the asymptotically accurate approximation even correctly (qualitatively) gets the ``turning the corner", up from $x=\sqrt{2\pi}/\e$ to $x=\sqrt{2\pi}$. The inverse Gamma function~$\invG$ is in black (solid line), the fromula~(\ref{eq:asymptInvGo}) in red (dotted line), and the other branch from~(\ref{eq:asymptInvGM}) in blue (dashed line).}
	\label{fig:asymptInvG}
\end{figure} 

We claim but do not prove here (this paper is already too long) that the relative error in $y$ is $\mathcal{O}(1/y^{2})$ as $x \to \infty$; more precisely, the next two terms in the series are
\begin{equation}
y = \frac{1}{2} + u_{0} + \frac{1}{24u_{0}(1+w)} - \frac{5 + 10(1+w)+14(1+w)^{2}}{5760(1+w)^{3}u_{0}^{3}} + \cdots
\end{equation} where $u_{0}=\ln(x/\sqrt{2\pi})/w$ with $w=W_{0}(\ln(x/\sqrt{2\pi})/\e)$. There is a surprising connection to one of the series in~\cite{wang2016}, but this will be discussed another day.

We remark that de Moivre's simpler formula 
\begin{equation}
\ln \Gamma (z) \sim z \ln z - \frac{1}{2} \ln z - z + \ln \sqrt{2\pi}
\end{equation}
is not as simple to invert:
\begin{equation}
\ln \bigg( \frac{x}{\sqrt{2\pi}} \bigg) \doteq \bigg(y-\frac{1}{2} \bigg) \ln y - y
\end{equation} doesn't have a solution in terms of Lambert $W$, and thus we're forced to approximate:
\begin{equation}
\ln \bigg( \frac{x}{\sqrt{2\pi}} \bigg) \doteq y \ln y - y - \frac{1}{2} \ln y \>,
\end{equation} and ignoring the $-1/2 \ln y$, we get the same equation as before, but now 
\begin{equation}
y = \frac{\ln (x / \sqrt{2\pi})}{W \bigg(\frac{1}{\e} \ln x/\sqrt{2\pi} \bigg)}
\end{equation} which always differs by $1/2$ from the previous.

\subsection{Drawing the line somewhere}

We could have included here a look at~\cite{axness},~\cite{fine1947},~\cite{goetgheluck1987}, \\~\cite{golomb1980},~\cite{golomb1992},~\cite{konvalina2000},~\cite{ogilvy},~\cite{wolfram1984}, or for that matter~\cite{knuth1992}, but no. Read them yourselves!

\section{Concluding Remarks}

This has been an amazing journey for us, through the Monthly and through a selection of the vast $\Gamma$ literature. The $\Gamma$ bibliography at milanmerkle.com had $986$ entries when we accessed it. This present survey has focussed on Monthly papers, with some others for context. Diego Dominici's survey of Stirling's formula~\cite{dominici} attempted some of this, and we found his paper (in the Nota di Matematica) helpful; but indeed this whole project was a wonderful excuse to read great mathematical works. The ostensibly novel contributions of this paper (the pictures of the Riemann surface for inv$\Gamma$, the asymptotic formula for inv$\Gamma$, and the elementary bracketing inequality~(\ref{eqn:hummel}) for $n!$ that has approximations to Stirling's original formula on the left and de Moivre's simpler formula that we call ``Stirling's formula" on the right), are small in comparison.

Stirling's original series for $n!$ is largely unknown, although the historians have it right; and Wikipedia correctly attributes the series popularly known as ``Stirling's formula" to de Moivre. For more information on de Moivre's contributions, see~\cite{bellhouse2011}. Recently, Peter Luschny~\cite{luschny2012} and Weiping Wang~\cite{wang2016} have started calling the exponentiated version of Stirling's original formula by the name ``the de Moivre formula"~\cite[pg.~584]{wang2016}! In view of similar historical misnamings (Euler invented Newton's method; Newton invented (symplectic) Euler's method, for instance~\cite{wanner2010}), we're ok with this.

After writing this paper, we discovered that David W. Cantrell in 2001~\cite{cantrell2001} found the asymptotic formula for $\invG$, before us. However, we then found that it's also in~\cite{Gonnet1981}, beating us all by decades!\\

\textit{Dedicated to the memory of Jonathan M. Borwein.} 

\section*{About the Paper}
I proposed the idea of this paper to Jon in late May 2016, and he was immediately enthusiastic. Jon suggested that we start by scouring JSTOR for Monthly titles containing ``Gamma'' or ``Stirling's formula''. Later, we added the terms ``factorial'' and ``psi'' to the search. I wrote the first draft of the paper, containing the elements of every section except section~\ref{sec:gammaitself}, meaning to leave that to Jon. In our many lunchtime discussions of the evolving draft over the months of June and July, Jon made many contributions, including pointing out connections to some of his early work. He also suggested that one of my results in section~\ref{sec:stirling} was not as `new' as I had thought; as I now know, Jon was right.

Jon died early in the morning August 2, 2016, the week I was to pass the draft over to him. Jon's influence on this paper is therefore much less than it could have been. Now we'll just have to imagine what Jon would have done with section~\ref{sec:gammaitself}. 

On a final note, the task that this paper attempts is impossible in reasonable time (and we both knew it). A sample of the Problems and Solutions section turns up $\Gamma(z)$ or $n!$ or $\psi$ more frequently than not, as the simplest rebuttal of a claim to completeness. But I like to think that this paper achieves 80$\%$ coverage. Had Jon lived, we might have achieved 90$\%$. 

Judith Borwein has graciously permitted Jonathan's name to be listed as author; certainly I feel it's justified. It wasn't just details: part of this paper was his vision.
\begin{acknowledgment}{Acknowledgment.}
This work was supported by Western University through a Distinguished Visiting Research Fellowship for the first author, and by NSERC, the Fields Institute for Research in the Mathematical sciences, the Rotman Institute for Philosophy, and the Ontario Research Centre for Computer Algebra for the second. The manuscript was typed by Julia E. Jankowski and Chris Brimacombe. 
\end{acknowledgment}

\bibliographystyle{myplainnat}
\bibliography{bib.bib}

\end{document}